\documentclass[12pt, twoside, leqno]{article}

\usepackage{amsfonts}
\usepackage{amsmath}
\usepackage{amssymb}
\usepackage{amsthm}
\usepackage{ifthen}
\usepackage{enumerate}

\pagestyle{myheadings}
\markboth{Michiel de Bondt and Dan Yan}{Triangularization properties $\ldots$ Structural Conjecture}

\newtheorem{theorem}{Theorem}[section]
\newtheorem{proposition}[theorem]{Proposition}

\newtheorem{lemma}[theorem]{Lemma}
\newtheorem*{structconj}{Structural Conjecture}
\theoremstyle{remark}

\newtheorem{example}[theorem]{Example}
\theoremstyle{definition}

\frenchspacing

\textwidth=13.5cm
\textheight=23cm
\parindent=16pt
\oddsidemargin=-0.5cm
\evensidemargin=-0.5cm
\topmargin=-0.5cm

\newcommand{\bcdot}{$\discretionary{\mbox{$ \cdot $}}{}{}$}
\newcommand{\parder}[3][Default]{
	\frac{\partial \ifthenelse{\equal{#1}{Default}}{}{^{#1}}#2}{
              \partial #3 \ifthenelse{\equal{#1}{Default}}{}{^{#1}}}}

\newcommand{\jac}{{\mathcal J}}

\newcommand{\GL}{\operatorname{GL}}
\newcommand{\imp}{{\mathversion{bold}$\Rightarrow$} }
\newcommand{\C}{{\mathbb C}}

\newcommand{\N}{{\mathbb N}}

\newcommand{\K}{{K}}
\newcommand{\Kbar}{\bar{K}}
\newcommand{\tp}{^{\mathrm t}}
\newcommand{\I}{{\mathrm i}}

\newcommand{\rk}{\operatorname{rk}}
\newcommand{\Mat}{\operatorname{Mat}}

\title{Triangularization properties of power linear maps and the Structural Conjecture}
\author{Michiel de Bondt\footnote{The first author was supported by the Netherlands 
                          Organisation for Scientific Research (NWO).} \\
Department of Mathematics, Radboud University \\ 
Nijmegen, The Netherlands \\
\emph{E-mail:} M.deBondt@math.ru.nl
\and
Dan Yan \\
School of Mathematical Sciences, Graduate University of \\
Chinese Academy of Sciences, Beijing 100049, China \\
\emph{E-mail:} yan-dan-hi@163.com}
\date{}

\begin{document}

\maketitle 

\renewcommand{\thefootnote}{}

\footnote{2010 \emph{Mathematics Subject Classification}: Primary 14R10; Secondary 14R15.}

\footnote{\emph{Key words and phrases}: Jacobian Conjecture, Dru{\.z}kowski map, Structural Conjecture, 
Linearly triangularizable.}

\renewcommand{\thefootnote}{\arabic{footnote}}
\setcounter{footnote}{0}

\begin{abstract}
\noindent
In this paper, we discuss several additional properties a power linear Keller map may have.
The Structural Conjecture by Dru{\.z}kowski in \cite{MR0714105} asserts that two such properties are 
equivalent, but we show that one of this properties is stronger than the other. We even show 
that the property of linear triangularizability is strictly in between. Furthermore, we give some
positive results for small dimensions and small Jacobian ranks.
\end{abstract}

\section{Introduction}

Throughout this paper, we will write $\K$ for any field of characteristic zero, $\Kbar$ for its algebraic 
closure, and $\K[x]=\K[x_1,x_2,\ldots,x_n]$ for the polynomial algebra over $\K$ with $n$ indeterminates 
$x=x_1,x_2,\ldots,x_n$. Let $F=(F_1,F_2,\ldots,F_n): \K^n \rightarrow \K^n$
be a polynomial map, that is, $F_i\in\K[x]$ for all $1\leq i\leq n$, or briefly $F \in \K[x]^n$.
We view $F$ and $x$ as column matrices, as well as $\partial = \partial_1,\partial_2,\ldots,\partial_n$,
where $\partial_i = \parder{}{x_i}$. Just like in $F=(F_1,F_2,\ldots,F_n)$, we see any other
tuple whose elements are separated by commas as a column vector as well.
Let $M\tp$ be the transpose of a matrix $M$ and write
$$
\jac F = \big(\partial (F\tp)\big)\tp = \left( \begin{array}{cccc}
\partial_1 F_1 & \partial_2 F_1 & \cdots &  \partial_n F_1 \\
\partial_1 F_2 & \partial_2 F_2 & \cdots &  \partial_n F_2 \\
\vdots & \vdots & \ddots & \vdots \\
\partial_1 F_n & \partial_2 F_n & \cdots &  \partial_n F_n
\end{array} \right) 
$$
We say that a polynomial map $F$ is a Keller map if $\det \jac F \in K^{*}$.
The well-known Jacobian Conjecture, raised by O.-H. Keller in 1939 in \cite{MR1550818}, states that a 
polynomial map $F:\K^n \rightarrow \K^n$ is invertible if it is a Keller map.
This conjecture is still open for all $n \ge 2$.
In \cite[Th.\@ 3]{MR0714105}, Ludwik Dru{\.z}kowski showed that it suffices to consider polynomial maps 
$F: \C^n \rightarrow \C^n$ of the form $F = x + (Ax)^{*3}$, where $A \in \Mat_n(\C)$ and $M^{*d}$
is the $d$-th Hadamard power (repeated Hadamard product with itself) of a matrix $M$.

In the same paper, Dru{\.z}kowski also formulated the Structural Conjecture, which asserts the following.
Write $M|_{x=G}$ for the substitution of $x$ by $G$ in a matrix $M$.

\begin{structconj}
If $F = x + (Ax)^{*3}$ and $\det \jac F = 1$, then the following conditions are equivalent.
\begin{description}

\item[\bf(JC)] $\det\big((\jac F)|_{x=v_1} + (\jac F)|_{x=v_2}\big) \ne 0$ for all $v_1, v_2 \in \C^n$.

\item[\bf(**)] There exist $b_i \in \C^n$ and $c_j \in \C^n$ such that $c_j\tp b_i = 0$ 
for every $i \ge j \ge 1$, and $F$ has the form $x + \sum_{i=1}^{n-1} (c_i\tp x)^3 b_i$.

\end{description}
\end{structconj}

\noindent
Actually, Dru{\.z}kowski writes $F = x + \sum_{j=1}^n (a_j\tp x)^3 e_j$ instead of $F = x + (Ax)^{*3}$, where
$e_j$ is the $j$-th standard basis unit vector. Hence $a_j\tp$ corresponds to the $j$-th row $A_j$ of $A$.
Since the vectors $c_j$ and $b_i$ are viewed as column matrices, the matrix product $c_j\tp b_i$ has only
one entry, which we see as an element of $\C$.

We call a polynomial map $F$ over $\K$ {\em linearly triangularizable} if there exists a $T \in \GL_n(\K)$
such that the Jacobian of $T^{-1} F(Tx)$ is a triangular matrix. For Keller maps of the form $F = x + H$ with
$H$ homogeneous of degree $d \ge 2$, the existence of such a $T$ automatically means that the diagonal of 
$$
\jac \big(T^{-1} F(Tx)\big) = T^{-1} (\jac H)|_{x=Tx} T
$$
is zero, because $\jac H$ has to be nilpotent due to the Keller condition.

We embed the Structural Conjecture in a more general scope, where $F$ has the form $x + H$
such that $\jac H$ is nilpotent, and compare its conditions with linear 
triangularizability and other properties. We give positive results in special cases and counterexamples
in general. When we give counterexamples, we will give one of the form $F = x + H$ with
$H$ homogeneous of degree $d$ and one of the form $F = x + (Ax)^{*d}$, for every $d \ge 3$ and possibly also 
for $d = 2$.

\section{Triangularization}

In the following proposition, the conditions (JC) and (**) of the Structural Conjecture are included
in a chain of six properties. Furthermore, we generalize to maps $x + H$ such that $H$ has no constant
terms instead of being homogeneous.

\begin{proposition} \label{chain6}
Let $F = x + H$ be any polynomial map of degree $d$ over $\K$. Then for
\begin{align} 
&F \mbox{ is invertible} 
\label{jc} \tag{JC$^{-}$\hspace*{-0.5pt}} \\
&\det\Big(\sum_{i=1}^{d-1} \jac F|_{x=v_i}\Big) \in \Kbar^{*} \mbox{ for all $v_i \in \Kbar^n$} 
\label{adddruz} \tag{JC} \\
&\det\Big(\sum_{i=1}^n \jac F|_{x=v_i}\Big) \in \Kbar^{*} \mbox{ for all $v_i \in \Kbar^n$} 
\label{addkel} \tag{JC$^{+}$\hspace*{-0.5pt}}
\end{align}
and for the existence of $b_i \in \K^n$, $c_j \in \K^n$ and $d_i \in \{1,2,\ldots,d\}$ such that
$c_j\tp b_i = 0$ for every $i \ge j \ge 1$, and
\begin{align}
H &= \sum_{i=1}^N (c_i\tp x)^{d_i} b_i \mbox{ for some } N \in \N 
\label{yantri} \tag{*} \\
H &= \sum_{i=1}^{n-1} (c_i\tp x)^{d_i} b_i 
\label{druztri} \tag{**} \\
H &= \sum_{i=1}^{n-1} (c_i\tp x)^{d_i} b_i \mbox{ and } b_1,b_2,\ldots,b_{n-1} \mbox{ are linearly independent}
\label{dittotri} \tag{***}
\end{align}
we have $\eqref{jc} \Leftarrow \eqref{adddruz} \Leftarrow \eqref{addkel} \Leftarrow 
\eqref{yantri} \Leftarrow \eqref{druztri} \Leftarrow \eqref{dittotri}$.

Furthermore, \eqref{jc}, \eqref{adddruz} and \eqref{addkel} are satisfied when $d \le 2$.
\end{proposition}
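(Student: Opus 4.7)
My plan is to verify the six implications from right to left, in increasing order of difficulty, and then handle the ``furthermore'' claim.

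The first two arrows \eqref{dittotri} $\Rightarrow$ \eqref{druztri} $\Rightarrow$ \eqref{yantri} are essentially tautological, obtained by dropping the independence hypothesis on the $b_i$ and enlarging the allowed range of $N$. The crucial computation is \eqref{yantri} $\Rightarrow$ \eqref{addkel}. Writing $H = \sum_{i=1}^N (c_i\tp x)^{d_i} b_i$, direct differentiation gives $\jac H = \sum_i d_i (c_i\tp x)^{d_i-1}\, b_i c_i\tp$, so
$$\sum_{k=1}^n \jac F|_{x=v_k} = nI + M, \qquad M = \sum_{i=1}^N \alpha_i\, b_i c_i\tp$$
for certain scalars $\alpha_i \in \Kbar$. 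Since $(b_i c_i\tp)(b_j c_j\tp) = (c_i\tp b_j)\, b_i c_j\tp$ and the orthogonality $c_j\tp b_i = 0$ for $i \ge j$ forces $c_i\tp b_j$ to vanish unless $j < i$, every nonzero product of length $k$ of the rank-one summands is indexed by a strictly decreasing chain $i_1 > i_2 > \cdots > i_k$ in $\{1,\ldots,N\}$. Hence $M^{N+1} = 0$, $M$ is nilpotent, and $\det(nI + M) = n^n \ne 0$, giving \eqref{addkel}.

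For \eqref{adddruz} $\Rightarrow$ \eqref{jc} I would use a Chebyshev-type equal-weight quadrature with $d-1$ nodes. Over the algebraically closed $\Kbar$, Newton's identities yield $t_1,\ldots,t_{d-1}\in\Kbar$ with $\sum_{i=1}^{d-1} t_i^k = (d-1)/(k+1)$ for $k = 1,\ldots,d-1$, so $\int_0^1 p(t)\,dt = \frac{1}{d-1}\sum_i p(t_i)$ holds for every polynomial $p$ of degree $\le d-1$. Applying this entry-wise to the matrix polynomial $t\mapsto\jac F|_{x=z+t(y-z)}$ (which has degree $\le d-1$ in $t$) and combining with $F(y)-F(z) = \int_0^1 \jac F|_{x=z+t(y-z)}(y-z)\,dt$ yields
$$F(y) - F(z) = \frac{1}{d-1}\Big(\sum_{i=1}^{d-1}\jac F|_{x=v_i}\Big)(y-z), \qquad v_i := z + t_i(y-z)\in\Kbar^n.$$
The matrix on the right is invertible by \eqref{adddruz}, so $F$ is injective on $\Kbar^n$; since $F$ is a polynomial endomorphism of affine $n$-space, injectivity implies invertibility.

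The step \eqref{addkel} $\Rightarrow$ \eqref{adddruz} is the most delicate. The polynomial $P(v_1,\ldots,v_n) = \det(\sum_{i=1}^n\jac F|_{x=v_i})$ has no zero on $\Kbar^{n^2}$, so it is a nonzero constant $c$. Specializing $v_d = \ldots = v_n = 0$ and then replacing $v_i$ by $\mu v_i$ for $1 \le i \le d-1$ produces a polynomial identity $\det(M_0 + \sum_{k=1}^{d-1}\mu^k N_k) = c$ in $\mu$, where $M_0 = nI + n\,\jac H|_{x=0}$ and each $N_k$ collects the degree-$k$ homogeneous components of $\sum_{i=1}^{d-1}\jac H|_{x=v_i}$. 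Since this identity forces every coefficient of $\mu^j$ ($j\ge 1$) in the characteristic polynomial of $M_0^{-1}\sum_k\mu^k N_k$ to vanish, that matrix is nilpotent for each $\mu$; at $\mu = 1$ this gives nilpotency of $M_0^{-1}\sum_i(\jac H|_{x=v_i}-\jac H|_{x=0})$, and the algebraic rewriting $nI + (n-d+1)\jac H|_{x=0} = (d-1)I + (n-d+1)\jac F|_{x=0}$ converts nilpotency into invertibility of $(d-1)I + \sum_{i=1}^{d-1}\jac H|_{x=v_i}$. The ``furthermore'' clause is much easier: when $d\le 2$ the entries of $\jac H$ have degree $\le 1$ in $x$, so $\sum_{i=1}^n\jac H|_{x=v_i}$ equals either $n\,\jac H$ (if $d=1$) or $\jac H|_{x=v_1+\cdots+v_n}$ (if $d=2$), and the paper's standing nilpotency of $\jac H$ passes to the sum. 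The main obstacle is the inhomogeneous case $\jac H|_{x=0}\ne 0$ in \eqref{addkel} $\Rightarrow$ \eqref{adddruz}, where the constant Jacobian term must be isolated carefully before the nilpotency argument applies.
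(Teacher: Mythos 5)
Your handling of the two trivial implications and of \eqref{yantri} $\Rightarrow$ (JC$^{+}$) matches the paper exactly (the strictly decreasing chain argument giving $M^{N+1}=0$ is the paper's own computation), and your equal-weight quadrature argument for (JC) $\Rightarrow$ (JC$^{-}$) is a correct self-contained reconstruction of the result the paper simply cites from [GdBDS, Cor.\ 2.3]. The genuine gap is in your step (JC$^{+}$) $\Rightarrow$ (JC), which the paper delegates to [GdBDS, Th.\ 3.5]. First, the specialization $v_d=\cdots=v_n=0$ presupposes $d-1\le n$; when $d>n+1$ you must pass from a sum of $n$ Jacobians to a sum of \emph{more} than $n$ of them, which no specialization of the $v_i$ can do, and your argument is silent on that case. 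Second, and more seriously, the inference from the identity $\det\bigl(M_0+\sum_{k}\mu^k N_k\bigr)=c$ (for all $\mu$) to the nilpotency of $A(\mu):=M_0^{-1}\sum_k\mu^k N_k$ is invalid: the identity only says that $\sum_{j\ge1}e_j(A(\mu))=0$, where $e_j$ denotes the sum of the principal $j\times j$ minors, and it does not force the individual $e_j(A(\mu))$ to vanish. For instance
$$
A(\mu)=\left(\begin{array}{cc}\mu&\mu\\1&0\end{array}\right)
$$
satisfies $\det(I_2+A(\mu))=1$ identically in $\mu$, yet has trace $\mu\ne 0$, so it is not nilpotent. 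To separate the coefficients one needs a free scalar multiplying the whole perturbation, i.e.\ one needs $\det(M_0+sM)=c$ for all $s$; producing that scalar from (JC$^{+}$) is exactly the nontrivial content of the cited theorem (one exploits that the power sums $\sum_i\mu_i^k$, $k=1,\ldots,d-1$, of $d-1$ independent scalings can be prescribed arbitrarily, so the homogeneous pieces $N_k$ can be decoupled), and your sketch omits it.

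Two smaller points. In the ``furthermore'' clause your $d=2$ identity should be $\sum_{i=1}^n\jac H|_{x=v_i}=n\,\jac H|_{x=\bar v}$ with $\bar v=\frac1n(v_1+\cdots+v_n)$; your version drops the term $(n-1)\jac H|_{x=0}$ when $H$ has a linear part. The corrected formula still gives (JC$^{+}$), but only under the Keller/nilpotency hypothesis, which, as you rightly sense, the proposition uses implicitly through the citation of [GdBDS, Prop.\ 3.1] even though it is not stated in the proposition itself; and you should add that (JC) and (JC$^{-}$) for $d\le 2$ then follow from the first two implications, as the paper notes.
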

 
\begin{proof}
Notice that the last two implications are trivial. The first two implications
follow from \cite[Cor.\@ 2.3]{MR2948624} and \cite[Th.\@ 3.5]{MR2948624} respectively. 
The last claim follows from the first two implications and \cite[Prop.\@ 3.1]{MR2948624}. 

To show the third implication, assume that \eqref{yantri} holds and take $v_1, v_2, \ldots, 
\allowbreak v_n \in \Kbar$ arbitrary. Then 
$$
S := \sum_{k=1}^n (\jac H)|_{x=v_k} = \sum_{k=1}^n \sum_{i=1}^N b_i \cdot d_i (c_i\tp v_k)^{d_i-1} \cdot 
c_i\tp = \sum_{i=1}^N b_i \Big(d_i \sum_{k=1}^n(c_i\tp v_k)^{d_i-1}\Big) c_i\tp 
$$
It follows that in the expansion of $S^{N+1}$, each term will have a factor
$c_j\tp \cdot b_i$ such that $i \ge j \ge 1$, which is zero by assumption. Hence $S^{N+1} = 0$. 
Thus $S$ is nilpotent and $\det(\sum_{i=1}^n \jac F|_{x=v_i}) = \det (n I_n + S) = n^n \in \Kbar^{*}$. 
\end{proof}

\noindent
In the last section, we will show that $\eqref{jc} \nRightarrow \eqref{adddruz}$ and
$\eqref{addkel} \nRightarrow \eqref{yantri} \nRightarrow \eqref{druztri} \nRightarrow 
\eqref{dittotri}$, even in the case where $H$ is {\em homogeneous power linear}, i.e.\@ $H = \sum_{i=1}^n 
(c_i\tp x)^d e_i$ for some $c_i \in \K^n$ and a $d \ge 1$. But first, we formulate a lemma and a theorem
about the starred equations. We call $H$ {\em non-homogeneous power linear} if $H = \sum_{i=1}^n 
(c_i\tp x)^{d_i} e_i$ for some $c_i \in \K^n$ and some $d_i \ge 1$.

\begin{lemma} \label{termtri}
Let $N \in \N$ and suppose that there exist $b_i \in \K^n$, $c_j \in \K^n$ and $d_i \in \N$ such that
$$
H = \sum_{i=1}^N (c_i\tp x)^{d_i} b_i
$$
Then the following statements are equivalent.
\begin{enumerate}[\upshape (i)]

\item There exists a $\sigma \in S_N$ such that $c_{\sigma(j)}\tp b_{\sigma(i)} = 0$
for every $i \ge j \ge 1$.

\item There exists a $T \in \GL_n(\K)$
such that the Jacobian of $T^{-1} (c_i\tp Tx)^{d_i} b_i$ is lower triangular
with zeroes on the diagonal for all $i \le N$.

\end{enumerate}
Furthermore, if $b_1, b_2, \ldots, b_N$ are linearly independent and $\sigma$ satisfies 
{\upshape (i)}, then we can choose the $T \in \GL_n(\K)$ which satisfies 
{\upshape (ii)} such that
\begin{equation} \label{dittox}
b_{\sigma(i)} = T e_{n - N + i}
\end{equation}
for each $i$.
\end{lemma}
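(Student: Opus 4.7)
The plan is to translate both conditions into support/orthogonality statements after a change of basis, and then do linear algebra. A direct computation gives
$$
\jac\!\big(T^{-1}(c_i\tp T x)^{d_i} b_i\big) = d_i (c_i\tp T x)^{d_i-1}\,(T^{-1}b_i)(c_i\tp T),
$$
so (ignoring trivially vanishing terms) strict lower triangularity of this Jacobian as a polynomial matrix in $x$ is equivalent to strict lower triangularity of the rank-one matrix $(T^{-1} b_i)(c_i\tp T)$. Setting $\tilde b_i := T^{-1} b_i$ and $\tilde c_i := T\tp c_i$, (ii) becomes the condition that $\min\mathrm{supp}(\tilde b_i) > \max\mathrm{supp}(\tilde c_i)$ for each $i$, while (i) becomes $\tilde c_{\sigma(j)}\tp \tilde b_{\sigma(i)} = 0$ for every $i \geq j$. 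Geometrically, (ii) asks for a single complete flag of $\K^n$ accommodating all pairs $(b_i,c_i)$, and (i) is the combinatorial shadow of such a flag.

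For (ii)$\,\Rightarrow\,$(i), I would sort $\sigma$ so that $m_i := \max\mathrm{supp}(\tilde c_i)$ is non-decreasing along $\sigma(1),\ldots,\sigma(N)$. Then for $i \geq j$, $\mathrm{supp}(\tilde c_{\sigma(j)}) \subseteq \{1,\ldots,m_{\sigma(i)}\}$ and $\mathrm{supp}(\tilde b_{\sigma(i)}) \subseteq \{m_{\sigma(i)}+1,\ldots,n\}$, so the supports are disjoint and the inner product vanishes.

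For (i)$\,\Rightarrow\,$(ii), I relabel so $\sigma = \mathrm{id}$, set $r_k := \dim \mathrm{span}(c_1,\ldots,c_k)$ and $P_k := \mathrm{span}(c_1,\ldots,c_k)^\perp$. Condition (i) reads $b_i \in P_i$, and $\K^n = P_0 \supseteq P_1 \supseteq \cdots \supseteq P_N$ is a descending flag with $\dim P_k = n-r_k$. Extending bases through this flag, I build a basis $v_1,\ldots,v_n$ of $\K^n$ such that $v_{r_k+1},\ldots,v_n$ is a basis of $P_k$ for every $k$, and take $T$ to be the matrix with $k$-th column $v_k$. Then $b_i \in P_i$ forces $T^{-1} b_i \in \mathrm{span}(e_{r_i+1},\ldots,e_n)$, and $v_\ell \in P_i$ for $\ell > r_i$ forces $(T\tp c_i)_\ell = v_\ell\tp c_i = 0$, so $(T^{-1}b_i)(c_i\tp T)$ is strictly lower triangular as required.

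For the refinement, assume $b_1,\ldots,b_N$ are independent and that $\sigma$ witnesses (i). Here I bypass the flag construction: take the $(n-N+i)$-th column of $T$ to be $b_{\sigma(i)}$ for $i = 1,\ldots,N$, and complete the first $n-N$ columns to a basis arbitrarily (possible by independence). Then $T^{-1} b_{\sigma(i)} = e_{n-N+i}$, and strict lower triangularity of $e_{n-N+i}(c_{\sigma(i)}\tp T)$ unravels to $c_{\sigma(i)}\tp b_{\sigma(j)} = 0$ for $j \geq i$, which is precisely (i). The main obstacle throughout is really just keeping the index bookkeeping straight; the genuine content is the flag extension used in (i)$\,\Rightarrow\,$(ii).
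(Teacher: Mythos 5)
Your proof is correct. The direction ii) $\Rightarrow$ i) is essentially the paper's own argument: sort the terms by the position of the last nonzero coordinate of $T\tp c_i$ and compare supports of $T\tp c_j$ and $T^{-1}b_i$. The genuine divergence is in i) $\Rightarrow$ ii), where you and the paper construct dual flags. The paper extracts a maximal independent subfamily $b_{\tau(1)},\ldots,b_{\tau(r)}$ of the $b_i$'s (each independent of its successors), places these as the last $r$ columns of $T$, and works with the ascending chain of spans of tails $\operatorname{span}(b_{\tau(k)},\ldots,b_{\tau(r)})$; you instead use the descending chain $P_k=\operatorname{span}(c_1,\ldots,c_k)^{\perp}$, which contains $b_k$ by hypothesis, and adapt a basis of $\K^n$ to it. Both produce a flag with $b_i$ inside and $c_i$ annihilating the $i$-th member, which is all that strict triangularity of the rank-one factor $(T^{-1}b_i)(c_i\tp T)$ requires. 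The paper's choice pays off in the refinement: when the $b_i$'s are independent one gets $r=N$ and $\tau=\mathrm{id}$, so \eqref{dittox} holds for the very $T$ already constructed, whereas you must abandon your flag and re-verify ii) for a second, ad hoc $T$ whose last $N$ columns are the $b_{\sigma(i)}$'s --- which you do correctly, so nothing is lost, and your main construction is arguably cleaner to check since it avoids the $\tau$-bookkeeping. The only point to make explicit is that terms with $b_i=0$ or $c_i=0$, for which the minimum or maximum of the support is undefined, satisfy both conditions vacuously; the paper does this for $b_i=0$.
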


\begin{proof}
We prove that (i) and (ii) are equivalent, showing the last claim along the road.
\begin{description}
 
\item[(ii) \imp (i)]
Suppose that (ii) holds. Let $m_j$ be the number of trailing coordinates zero of $T\tp c_j$ for each $j$. 
By reordering terms of $H$, we can obtain that $m_j \ge m_i$ for each $i \ge j \ge 1$. By (ii), 
\begin{equation} \label{jacterm}
\jac \big(T^{-1} (c_i\tp T x)^d b_i\big) = T^{-1} b_i \cdot d_i (c_i\tp T x)^{d_i-1} \cdot c_i\tp T 
\end{equation}
is lower triangular with zeroes on the diagonal. 
Hence the number of leading coordinates zero of $T^{-1} b_i$ is at least $n - m_i \ge n - m_j$ for each 
$i \ge j \ge 1$. Comparing the numbers of leading and trailing coordinates zero, we get 
$c_j\tp b_i = c_j\tp T \cdot T^{-1} b_i = 0$ for all $i \ge j \ge 1$, which is (i) with $\sigma = 1$. 
So we can take $\sigma = 1$ when $m_j \ge m_{j+1}$ for each $j$ already before 
reordering the terms of $H$.

\item[(i) \imp (ii)]
Suppose that (i) holds. Again by reordering terms of $H$, we can obtain that $\sigma = 1$. 
Suppose that the vector space spanned by the column vectors $b_1, b_2, \ldots, b_N$
has dimension $r$. Then there are $\tau(1) < \tau(2) < \cdots < \tau(r)$ such that 
$b_{\tau(1)}, b_{\tau(2)}, \ldots, b_{\tau(r)}$ is a basis of this vector space. Now choose
$\tau(1) + \tau(2) + \cdots + \tau(r)$ as large as possible. Then $b_i$ is linearly dependent of 
$b_{\tau(k)}, b_{\tau(k+1)},\ldots,b_{\tau(r)}$ for all $k$ and all $i > \tau(k-1)$, where $\tau(0) = 0$
and where zero vectors are linearly dependent of the empty set.
Furthermore, $c_i\tp b_{\tau(k)} = c_i\tp b_{\tau(k+1)} = \cdots = c_i\tp b_{\tau(r)} = 0$ for all $k$ and all
$i \le \tau(k)$ on account of (i) with $\sigma = 1$.

Take $T \in \GL_n(\K)$ such that the last $r$ columns of $T$ are $b_{\tau(1)}, b_{\tau(2)}, \ldots, 
\allowbreak b_{\tau(r)}$, in that order. Then we have \eqref{dittox} with $\sigma = 1$
if $b_1, b_2, \ldots, b_N$ are linearly independent. Take $i \le N$ arbitrary. It suffices to show that
\eqref{jacterm} is lower triangular with zeroes on the diagonal. This is trivial when $b_i = 0$,
so assume that $b_i \ne 0$.
Then by definition of $r$ and $\tau$, there exists a $k \ge 1$ such that $\tau(k) \geq i > \tau(k-1)$.
As we have seen above, $b_i$ is linearly dependent of $b_{\tau(k)}, b_{\tau(k+1)}, \ldots, b_{\tau(r)}$ and
$c_i\tp b_{\tau(k)} = c_i\tp b_{\tau(k+1)} = \cdots = c_i\tp b_{\tau(r)} = 0$.

Hence $T^{-1} b_i$ is linearly dependent of $e_{n-r+k}, e_{n-r+k+1}, \ldots, e_n$ and
$c_i\tp T \bcdot e_{n-r+k} = c_i\tp T e_{n-r+k+1} = \cdots = c_i\tp T e_n = 0$ by definition of $T$.
Consequently, all nonzero entries of \eqref{jacterm} are within the 
submatrix consisting of rows $n-r+k, \allowbreak n-r+k+1, \ldots, n$ and columns $1,2,\ldots,n-r+k-1$ of it. 
Since $i$ was arbitrary, we obtain (ii). \qedhere

\end{description}
\end{proof}

\begin{theorem} \label{tri}
Let $x + H$ be any map of degree $d \ge 1$ over $\K$ such that $H(0) = 0$. Then we have the following.
\begin{enumerate}[\upshape (i)]

\item $H$ is of the form \eqref{yantri}, if and only if there exists a $T \in \GL_n(\K)$
such that the Jacobian of $T^{-1} H(Tx)$ is lower triangular with zeroes on the diagonal, i.e.\@
$H$ is linearly triangularizable and $\jac H$ is nilpotent.

\item $H$ is of the form \eqref{druztri}, if and only if there exists $b_i, c_j \in \K^n$ and 
there exists a $T \in \GL_n(\K)$ such that $H = \sum_{i=1}^{n-1} (c_i\tp x)^{d_i} b_i$ 
and the Jacobian of $T^{-1} (c_i\tp Tx)^{d_i} b_i$ is lower triangular with zeroes on the diagonal
for all $i \le n-1$.

\item $H$ is of the form \eqref{dittotri}, if and only if there exists a $T \in \GL_n(\K)$
such that each component of $T^{-1} H(Tx)$ is a power of a linear form and the Jacobian of 
$T^{-1} H(Tx)$ is lower triangular with zeroes on the diagonal.

\end{enumerate}
\end{theorem}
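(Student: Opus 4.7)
My plan is to treat the three equivalences separately, reducing each forward direction to Lemma~\ref{termtri} and handling each backward direction by explicitly extracting the required decomposition from $\tilde H(x) := T^{-1} H(Tx)$. Throughout, I rely on the standard observation that if $\jac \tilde H$ is strictly lower triangular then $\tilde H_i \in \K[x_1,\ldots,x_{i-1}]$, and in particular $\tilde H_1$ is constant, hence zero because $H(0) = 0$. I also use that passing from $\tilde H$ back to $H$ via $H(x) = T\tilde H(T^{-1}x)$ sends each term $(\tilde c\tp x)^e \tilde b$ to $((T^{-\tp}\tilde c)\tp x)^e (T\tilde b)$, so the inner products $c\tp b$ and independence of the $b$-vectors are both preserved.

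Part ii) is essentially a direct translation of Lemma~\ref{termtri} with $N = n-1$: condition i) of the lemma (after reordering, which is harmless in a commutative sum) is exactly form~\eqref{druztri}, and condition ii) of the lemma is exactly the triangularity assertion in ii).

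For part i), the forward direction follows from Lemma~\ref{termtri}: every term of $H$ gets strictly lower triangular Jacobian after conjugation by $T$, and strict lower triangularity is preserved under addition, so $\jac(T^{-1}H(Tx))$ is strictly lower triangular as well. The backward direction is where the real work sits. Given $T$ with $\jac \tilde H$ strictly lower triangular, I would Waring-decompose each $\tilde H_i \in \K[x_1,\ldots,x_{i-1}]$ as $\sum_k \lambda_{ik} L_{ik}^{e_{ik}}$ with $L_{ik}$ a linear form in $x_1,\ldots,x_{i-1}$ and $e_{ik} \ge 1$, which is available in characteristic zero via polarization of monomials. Then $\tilde H = \sum_{(i,k)} (L_{ik}\tp x)^{e_{ik}} (\lambda_{ik}e_i)$, and ordering the index pairs lexicographically yields the orthogonality: for $(i,k) \ge (i',k')$ one has $i \ge i'$, while $L_{i'k'}$ is supported on coordinates $\{1,\ldots,i'-1\}$ and $e_i$ lies in coordinate $i \notin \{1,\ldots,i'-1\}$. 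Transporting through $T$ then produces~\eqref{yantri}.

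For part iii), the forward direction again uses Lemma~\ref{termtri}, this time invoking its ``furthermore'' clause: independence of $b_1,\ldots,b_{n-1}$ supplies $T$ with $b_{\sigma(i)} = Te_{i+1}$, so $\tilde H = \sum_{i=1}^{n-1} (c_{\sigma(i)}\tp Tx)^{d_{\sigma(i)}} e_{i+1}$ has $\tilde H_1 = 0$ and every other component a power of a linear form. For the backward direction I would write $\tilde H_i = L_i^{d_i}$ with $L_i \in \K[x_1,\ldots,x_{i-1}]$ (forcing $L_1 = 0$), read $\tilde H = \sum_{i=2}^n L_i^{d_i} e_i$ as $n-1$ terms with $b$-vectors $e_2,\ldots,e_n$ already independent, and note that $L_j\tp e_i = 0$ for $i \ge j$ is immediate from the support of $L_j$. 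Transport through $T$ preserves both independence and orthogonality, yielding~\eqref{dittotri}. The only genuinely delicate step is the Waring decomposition in i) backward; the mild subtlety there is to absorb each scalar $\lambda_{ik}$ into the vector $b_{ik} := \lambda_{ik} e_i$ rather than into the linear form, to avoid taking $e_{ik}$-th roots of $\lambda_{ik}$ over a possibly non-closed $\K$.
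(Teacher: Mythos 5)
Your proposal is correct and follows essentially the same route as the paper: forward directions via i) $\Rightarrow$ ii) of Lemma~\ref{termtri} (with its ``furthermore'' clause giving $b_{\sigma(i)} = Te_{i+1}$ for iii)), and backward directions by decomposing the components of $T^{-1}H(Tx)$ into powers of linear forms supported on initial coordinate blocks. The only cosmetic differences are that you justify the Waring step explicitly by polarization (the paper merely asserts the existence of such an $r$-term decomposition) and you check the orthogonality $c_j\tp b_i = 0$ directly from supports, where the paper re-invokes ii) $\Rightarrow$ i) of the lemma.
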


\begin{proof}
Since the three results have similarities, we structure the proof as follows.
\begin{description}
 
\item[Only-if-parts.] All only-if-parts follow immediately from (i) $\Rightarrow$ (ii) in 
lem\-ma \ref{termtri}, except the claim that each component of $T^{-1} H(Tx)$ is a power of a linear form
in (iii). So assume that $H$ is of the form \eqref{dittotri}. By \eqref{dittotri} and \eqref{dittox},
we have that
$$
T^{-1} H(Tx) = \sum_{i=1}^{n-1} T^{-1} (c_{\sigma(i)}\tp T x)^{d_i} b_{\sigma(i)} = 
\sum_{i=1}^{n-1} (c_{\sigma(i)}\tp T x)^{d_i} e_{i+1}
$$
for some $\sigma \in S_{n-1}$. So $T^{-1} H(Tx)$ is of the desired form.

\item[If-parts.] The if-part of (ii) follows immediately from (ii) $\Rightarrow$ (i) of 
lemma \ref{termtri}. To prove the if-part of (i), suppose that $T^{-1} H(Tx)$ has a lower 
triangular Jacobian with zeroes on the diagonal. Then there exists an $r \in \N$, such 
that we can write the $(i+1)$-th 
component of $T^{-1} H (Tx)$ as a linear combination of $r$ powers of linear forms $c_j\tp Tx$ 
in $\K[x_1,x_2,\ldots,x_i]$, for each $i \ge 1$. Furthermore, the first component of $T^{-1} H (Tx)$ is zero
on account of $H(0) = 0$. Hence we have
\begin{align*}
T^{-1} H(Tx) &= \sum_{i=1}^{n-1} \sum_{j=1}^r \big(c_{r(i-1)+j}\tp T x\big)^{d_{r(i-1)+j}} e_{i+1} \\
&= T^{-1} \sum_{i=1}^{n-1} \sum_{j=1}^r \big(c_{r(i-1)+j}\tp T x\big)^{d_{r(i-1)+j}} T e_{i+1}
\end{align*}
Taking $b_{r(i-1)+j} = T e_{i+1}$ for all $i$ and for all $j$ with $1 \le j \le r$, 
we have
$$
H = \sum_{i=1}^{n-1} \sum_{j=1}^r (c_{r(i-1)+j}\tp x)^{d_{r(i-1)+j}} b_{r(i-1)+j} 
= \sum_{i=1}^{r(n-1)} (c_i\tp x)^{d_i} b_i 
$$
Furthermore, for each $j$, the Jacobian of $(c_j\tp T x)^{d_j} T^{-1} b_j$ only has 
non\-zero entries within the submatrix consisting of row $i+1$ and columns 
$1,2,\ldots,i$ of it by definition of $c_j$ and $b_j$, where $i = \lceil j/r \rceil$. Hence 
the Jacobian of $(c_j\tp T x)^{d_j} T^{-1} b_j$ is 
lower triangular with zeroes on the diagonal for all $j$. Now the if-part of (i) follows from (ii) 
$\Rightarrow$ (i) of lemma \ref{termtri}. The if-part of (iii) follows as well, because we can take
$r = 1$ in that case, so that $r(n-1) = n-1$ and the $b_i$'s are linearly independent. \qedhere

\end{description}
\end{proof}

\section{Positive results}

\noindent
First, we formulate a theorem about maps $x + H$ such that $H$ is homogeneous and $\jac H$ is nilpotent.

\begin{theorem} \label{posres}
Assume that $H \in \K[x]^n$ is homogeneous of degree $d \ge 1$, such that $\jac H$ is nilpotent. 
Then we have \eqref{dittotri} (and hence five $\Rightarrow$'s) if $n \le 2$, 
and \eqref{yantri} (and hence three $\Rightarrow$'s) if $n = 3$ or $n = 4 = d+2$. 
Furthermore, the implication chain $\eqref{adddruz} \Rightarrow \eqref{addkel}
\Rightarrow \eqref{yantri}$ holds when $n = 4 = d+1$.

If $H$ is power linear in addition, then the above claims even hold when we replace the estimates on $n$
by estimates on $\rk \jac H$.
\end{theorem}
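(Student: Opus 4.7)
The plan is to stratify by the dimension $n$ (or by $r := \rk \jac H$ in the power linear case) and handle each regime separately, using Theorem \ref{tri} as the bridge that converts the algebraic forms \eqref{yantri} and \eqref{dittotri} into the geometric property of linear triangularizability. For $n = 1$ the nilpotency of $\jac H$ immediately forces $H = 0$, which is trivially of form \eqref{dittotri}. For $n = 2$ the $2 \times 2$ matrix $\jac H$ has vanishing trace and determinant, hence rank at most one over $\K(x)$; combined with the homogeneity of $H$ this should produce a $T \in \GL_2(\K)$ bringing $H$ to the form $(0, \lambda x_1^d)\tp$, giving \eqref{dittotri} directly.

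For $n = 3$ and for $n = 4 = d+2$ (so $d = 2$), the claim is \eqref{yantri}. Here I would appeal to the existing classifications of homogeneous nilpotent Jacobians in these dimensions, both of which are known to yield linear triangularizability; Theorem \ref{tri}(i) then upgrades this to \eqref{yantri}.

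The main obstacle is the case $n = 4 = d+1$, i.e.\@ $d = 3$. Here the claim is only the chain $\eqref{adddruz} \Rightarrow \eqref{addkel} \Rightarrow \eqref{yantri}$. For the first implication, I would expand $\det\bigl(\sum_{i=1}^n \jac F|_{x=v_i}\bigr) = \det\bigl(n I + \sum \jac H|_{v_i}\bigr)$ and exploit that the entries of $\jac H$ are homogeneous of degree $d - 1 = 2$ in $x$, reducing a sum over four points to a sum over two points via a polarization-style identity on quadratics. The harder second implication requires showing that \eqref{addkel} forces triangularizability; the plan is to split into subcases according to $\rk \jac H$ and, in each one, combine the determinantal hypothesis with a normal form for low-rank nilpotent polynomial matrices to produce an explicit triangularizing $T$, and then appeal to Theorem \ref{tri}(i).

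Finally, for the power linear extension, write $H = \sum_{i=1}^n (c_i\tp x)^d e_i$, so that $\jac H = d \cdot \diag\bigl((c_i\tp x)^{d-1}\bigr) \cdot C$ with $C$ the matrix having rows $c_i\tp$. Then $r = \rk \jac H$ equals $\rk C$ generically, and by a suitable linear change on the source coordinates together with a reindexing of the target basis one may assume that only $r$ of the summands $(c_i\tp x)^d e_i$ are nonzero and that the corresponding $c_i$'s span an $r$-dimensional subspace. This reduces the original $n$-variable system to an effectively $r$-variable power linear system, to which the preceding bounds now apply with $n$ replaced by $r$.
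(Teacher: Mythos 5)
Your treatment of $n\le 2$, $n=3$ and $n=4=d+2$ follows the same route as the paper (quote the known triangularization and strong--nilpotence classifications and pass through Theorem \ref{tri}), and is fine as a sketch. The case $n=4=d+1$ is where the proposal has genuine gaps. First, the polarization idea for $\eqref{adddruz}\Rightarrow\eqref{addkel}$ cannot work as stated: for $d=3$ the entries of $\jac H$ are quadratic forms in $x$, and a sum of four values of a vector-valued quadratic map is in general \emph{not} a sum of $d-1=2$ values (this is the classical defectivity of the second secant variety of the quadratic Veronese), so there is no identity reducing four evaluation points to two. Second, for the essential implication $\eqref{addkel}\Rightarrow\eqref{yantri}$ you offer only a restatement of the goal (``produce an explicit triangularizing $T$''), with no mechanism for extracting $T$ from the determinantal hypothesis. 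The paper argues in the opposite direction: by \cite[Th.\@ 4.6.5]{homokema}, every homogeneous cubic $H$ in dimension $4$ with nilpotent but non-triangularizable Jacobian has, up to conjugation, the normal form $H=\big(0,\lambda x_1^3, x_2(x_1x_3-x_2x_4)+p(x_1,x_2), x_1(x_1x_3-x_2x_4)+q(x_1,x_2)\big)$, and for such $F=x+H$ the rightmost two columns of $(\jac F)|_{x=(1,\I,0,0)}+(\jac F)|_{x=(1,-\I,0,0)}$ coincide, so \eqref{adddruz} fails. This yields $\eqref{adddruz}\Rightarrow\eqref{yantri}$, and the whole chain $\eqref{adddruz}\Rightarrow\eqref{addkel}\Rightarrow\eqref{yantri}$ then follows from the unconditional implications of Proposition \ref{chain6}. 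Without this classification-plus-explicit-singular-sum step you have no proof of the $n=4=d+1$ case.

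The power linear extension is also wrong as sketched. For $H=\sum_i (c_i\tp x)^d e_i$ one has $\rk\jac H=\dim\operatorname{span}_\K\{c_i\}$, which is in general much smaller than the number of nonzero summands, and no linear change of the source coordinates lets you ``assume that only $r$ of the summands are nonzero'' (consider $H=(x_2^d,x_2^d,0,\ldots,0)$, of rank $1$ with two nonzero summands); moreover an arbitrary conjugation destroys power linearity. The correct reduction from $\rk\jac H=r$ to dimension $r$ is Cheng's theorem \cite[Th.\@ 2]{MR2108467}, which the paper invokes, and for the case $\rk\jac H=4=d+1$ one needs the further variant noted in the paper: since the property being transferred is the implication $\eqref{adddruz}\Rightarrow\eqref{yantri}$ rather than \eqref{yantri} itself, one must check that \eqref{adddruz} is preserved under the conjugation $F_1=T^{-1}\circ F\circ T$ occurring in Cheng's proof. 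Your sketch addresses neither point.
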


\begin{proof}
We show the equivalent properties in (i) and (iii) of theorem \ref{tri} respectively 
instead of \eqref{yantri} and \eqref{dittotri}. We start with the cases where $H$ is only homogeneous.

The case $n \le 2$ follows from \cite[Lem.\@ 3]{MR2108467} and the case $n = 3$ follows from 
\cite[Th.\@ 1.1]{MR2179727}. 
The case $n = 4 = d+2$ follows from a corresponding strong nilpotence result in \cite{MR1129181}, and 
the equivalence of strong nilpotence and the property in (i) of theorem \ref{tri}, which is proved in 
\cite{MR1412743}.
The case $n = 4 = d+1$ follows from \cite[Th.\@ 4.6.5]{homokema} and the fact that $F = x+H$, with $H$ 
as in \cite[Th.\@ 4.6.5]{homokema}, does not 
satisfy \eqref{adddruz}, because the rightmost two 
columns of $(\jac F)|_{x=(1,\I,0,0)} + (\jac F)|_{x=(1,-\I,0,0)}$ are equal, where
$H = \big(0, \lambda x_1^3, x_2(x_1 x_3-x_2 x_4) + p(x_1,x_2), x_1(x_1 x_3-x_2 x_4) + q(x_1,x_2)\big)$.

Assume from now on that $H$ is power linear in addition.
The case $\rk \jac H \le 2$ follows from \cite[Th.\@ 4.7]{MR2333192}, because $\K = \C$
is not used in its proof, or theorem \ref{posresnh} below. The cases $\rk \jac H = 3$ and
$\rk \jac H = 4 = d+2$ follow by way of \cite[Th.\@ 2]{MR2108467} from the cases $n = 3$ and $n = 4 = d+2$ 
respectively. The case $\rk \jac H = 4 = d+1$ follows from the case $n = 4 = d+1$ by way of a variant 
of \cite[Th.\@ 2]{MR2108467}, namely with \eqref{yantri} replaced by $\eqref{adddruz} \Rightarrow 
\eqref{yantri}$. To prove this variant, one can follow the proof of \cite[Th.\@ 2]{MR2108467},
to see that it suffices to show that $F_1 = T^{-1} \circ F \circ T$ satisfies \eqref{adddruz}
if $F$ does, in that proof.
\end{proof}

\noindent
In theorem \ref{posresnh} below, which is the non-homogeneous variant of theorem \ref{posres}, we must 
replace the estimates on $n$ and $\rk \jac H$ of theorem \ref{posres} by estimates on $n + 1$ and 
$\rk \jac H + 1$ respectively, except the estimate $n \le 2$ for $\eqref{druztri} \Rightarrow 
\eqref{dittotri}$, and the estimate $\rk \jac H \le 2$ for $\eqref{dittotri}$, which can be maintained.

\begin{theorem} \label{posresnh}
Assume that $H \in \K[x]^n$ has degree $d$, such that $H(0) = 0$ and $\jac H$ is nilpotent. 
Then we have \eqref{dittotri} (and hence five $\Rightarrow$'s) if $n \le 1$, both
\eqref{yantri} and $\eqref{druztri} \Rightarrow \eqref{dittotri}$ (and hence four $\Rightarrow$'s) 
if $n = 2$, and \eqref{yantri} (and hence three $\Rightarrow$'s) if $n = 3 = d+1$. 
Furthermore, the implication chain $\eqref{adddruz} \Rightarrow \eqref{addkel} 
\Rightarrow \eqref{yantri}$ holds when $n = 3 = d$.

If $H$ is power linear in addition, then the above claims even hold when we replace the estimates on $n$
by estimates on $\rk \jac H$, and additionally \eqref{dittotri} (and hence five $\Rightarrow$'s) holds
when $\rk \jac H = 2$.

Furthermore, if we replace \eqref{yantri} and \eqref{dittotri} by their equivalences in 
{\upshape (i)} and {\upshape (iii)} of theorem \ref{tri}, then the condition $H(0) = 0$ is no longer necessary.
\end{theorem}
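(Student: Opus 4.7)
The approach is to reduce to the already-established homogeneous case Theorem~\ref{posres} by a standard homogenisation in one extra variable. Given $H \in \K[x]^n$ of degree $d$ with $H(0) = 0$ and $\jac H$ nilpotent, I define $\tilde H \in \K[x_1,\ldots,x_{n+1}]^{n+1}$ by
$$
\tilde H_j(x,x_{n+1}) := x_{n+1}^{d}\,H_j(x_1/x_{n+1},\ldots,x_n/x_{n+1}) \quad (j \le n), \qquad \tilde H_{n+1} := 0.
$$
Since $H(0)=0$, this is a polynomial map, homogeneous of degree $d$, with $\tilde H|_{x_{n+1}=1}=H$. The last row of $\jac \tilde H$ is zero and its upper-left $n\times n$ block equals $x_{n+1}^{d-1}(\jac H)(x/x_{n+1})$, so $\jac \tilde H$ is nilpotent iff $\jac H$ is. Since $\tilde H$ lives in $n+1$ variables, each dimension bound in Theorem~\ref{posres} applied to $\tilde H$ becomes a bound one smaller for $H$: the case $n \le 1$ comes from $n+1 \le 2$, $n=2$ from $n+1=3$, and the equalities $n=3=d+1$, $n=3=d$ from $n+1=4=d+2$, $n+1=4=d+1$, matching the statement exactly.

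Next I would pull each of the six properties back from $\tilde H$ to $H$. Properties \eqref{jc}, \eqref{adddruz} and \eqref{addkel} transfer by substituting $x_{n+1}=1$ in the defining determinants. For \eqref{yantri} I use the equivalence with Theorem~\ref{tri}(i): any triangularising $\tilde T \in \GL_{n+1}(\K)$ for $\tilde H$ can be normalised to the block upper triangular form with last row $(0,\ldots,0,1)$ (using $\tilde H_{n+1}=0$ and, if necessary, a preliminary reduction eliminating linear dependences among $\tilde H_1,\ldots,\tilde H_n$). Its upper-left $n\times n$ block $T$ then yields a matrix $T^{-1}H(Tx+v)$ with lower triangular Jacobian and zero diagonal; since this is a polynomial identity in $x$ over the infinite field $\K$, the same holds with the shift $v$ removed, giving linear triangularisability of $H$. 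To translate an explicit decomposition $\tilde H = \sum_i (\tilde c_i\tp \tilde x)^{d_i}\tilde b_i$ of type \eqref{yantri} back to $H$, specialising $x_{n+1}=1$ turns each $(\tilde c_i\tp \tilde x)^{d_i}$ into $(c_i\tp x + \alpha_i)^{d_i}$, which by the binomial theorem is a sum of powers of the single linear form $c_i\tp x$ sharing a common $\tilde b_i$-direction. Using $\tilde H_{n+1}=0$ to force $(\tilde b_i)_{n+1}=0$ (after discarding redundant terms so that the involved affine powers are linearly independent), the orthogonality pattern $c_j\tp b_i = 0$ is preserved. Property \eqref{dittotri} transfers analogously via Theorem~\ref{tri}(iii).

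For the power linear clauses with $\rk \jac H$ bounds, homogenisation does not preserve power-linearity since the exponents $d_i$ can differ from $d$, so I would argue directly along the lines of the power linear part of the proof of Theorem~\ref{posres}: a non-homogeneous variant of \cite[Th.\@ 2]{MR2108467} reduces the general rank-$r$ power linear case to one in $r$ variables, to which the dimension-bounded results just obtained apply. The extra clause $\rk \jac H = 2 \Rightarrow \eqref{dittotri}$ follows from \cite[Th.\@ 4.7]{MR2333192}, whose proof does not rely on homogeneity. Finally, the closing remark is immediate: replacing $H$ with $H-H(0)$ leaves $\jac H$ unchanged and does not alter the component-wise ``power of a linear form'' structure, so the equivalent formulations in Theorem~\ref{tri}(i) and (iii) are insensitive to $H(0)$.

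The main obstacle will be the careful normalisation of $\tilde T$ to the block upper triangular form together with ensuring the orthogonality pattern $c_j\tp b_i=0$ survives the binomial expansion from affine to linear forms upon specialising $x_{n+1}=1$; verifying $(\tilde b_i)_{n+1}=0$ after eliminating redundancies is the delicate bookkeeping step. The separate rank-$2$ power linear case lies entirely outside the homogenisation framework and must be handled by a direct appeal to the cited literature.
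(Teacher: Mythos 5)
Your homogenisation strategy is indeed how the paper handles the unconditional cases $n\le 1$, $n=2$ and $n=3=d+1$, but there the paper transfers the property through \emph{strong nilpotence}: the homogenisation has a strongly nilpotent Jacobian by theorem \ref{posres}, strong nilpotence survives the substitution $x_{n+1}=1$ trivially (it is a condition on evaluations at arbitrary points), and \cite{MR1412743} converts it back into the property in i) of theorem \ref{tri}. Your alternative -- normalising a triangularising matrix $\tilde T$ for $\tilde H$ to block form and tracking the orthogonality pattern $c_j\tp b_i=0$ through the binomial expansion at $x_{n+1}=1$ -- is exactly the step you flag as delicate, and you give no argument that such a normalisation of $\tilde T$ exists; the strong-nilpotence detour removes this difficulty entirely. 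The more serious problem is the conditional case $n=3=d$. Here you need the implication $\eqref{adddruz}\Rightarrow\eqref{yantri}$, and to route it through theorem \ref{posres} you would have to transfer the \emph{hypothesis} $\eqref{adddruz}$ upward from $H$ to its homogenisation $\tilde H$. Substituting $x_{n+1}=1$ only goes the other way: $\eqref{adddruz}$ for $\tilde H$ asks that $\det\bigl((d-1)I_n+\sum_i v_{i,n+1}^{\,d-1}(\jac H)|_{x=v_i'/v_{i,n+1}}\bigr)$ never vanish, which for non-homogeneous $H$ is strictly stronger than $\eqref{adddruz}$ for $H$ because of the extra scalar weights $v_{i,n+1}^{\,d-1}$. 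The paper instead proves this case directly: assuming i) of theorem \ref{tri} fails, it classifies $H$ up to conjugation as $\bigl(\lambda,\,x_1(x_2-x_1x_3)+p(x_1),\,(x_2-x_1x_3)+q(x_1)\bigr)$ and exhibits two points at which the sum of Jacobians is singular.

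The power linear rank-$2$ clause is also not recoverable the way you propose. The cited result \cite[Th.\@ 4.7]{MR2333192} concerns homogeneous power linear maps, and a reduction to two variables in the spirit of \cite[Th.\@ 2]{MR2108467} genuinely fails here: the paper's own remark after the theorem gives $H=\bigl(0,x_1^d,x_1^{d-1},(x_2+x_3)^d\bigr)$ as a rank-$2$ non-homogeneous power linear map that cannot be conjugated into $\K[x_1,x_2]^n$. This is precisely why the paper devotes the bulk of the proof to a direct construction: it takes $\lambda,\mu$ spanning the row space of $\jac H$ as in lemma \ref{lambdamu}, splits into the cases where $\mu\tp H$ is or is not a power of a linear form, and in the latter case builds $T^{-1}$ row by row from the Hadamard products $\mu\tp * V_i$ so that the components of $T^{-1}H(Tx)$ become the individual powers $(\lambda\tp x)^{d_i}$, landing in $\K\times\K[x_1]^r\times\K[x_1,\ldots,x_{r+1}]^{n-r-1}$. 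None of this is present in your proposal, so the rank-$2$ case (and hence the extra $\eqref{dittotri}$ claim) is a genuine gap.
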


\begin{proof}
We show the equivalent properties in (i) and (iii) of theorem \ref{tri} respectively 
instead of \eqref{yantri} and \eqref{dittotri}. We start with the cases where $H$ only has a nilpotent
Jacobian.

The case $n = 1$ is trivial, because $H = 0$ in that case.
Notice that in the cases $n = 2$ and $n = 3 = d+1$, the homogeneization
$x_{n+1}^d H(x_{n+1}^{-1}x, 0)$ of $H$ has a strongly nilpotent Jacobian on account of
theorem \ref{posres}. By substituting $x_{n+1} = 1$, we see that the Jacobian of $H$ itself
is strongly nilpotent as well. By the equivalence of strong nilpotence and the property in (i) of 
theorem \ref{tri}, which is proved in \cite{MR1412743}, we have the property in (i) of theorem 
\ref{tri}, and hence also \eqref{yantri}, when $n = 2$ or $n = 3 = d+1$. 
This gives the case $n = 3 = d+1$, and also the case $n = 2$, because \eqref{druztri}
and \eqref{dittotri} are trivially equivalent when $n = 2$. 

In order to prove the case $n = 3 = d$, assume that $H$ does not have
the property in (i) of theorem \ref{tri}. By \cite[Cor.\@ 4.6.6]{homokema}, we may assume that 
the first components of $T^{-1} H(Tx)$ equals $\lambda \in \K$ for some $T \in \GL_3(\K)$. Following the 
proof of \cite[Th.\@ 4.6.5]{homokema}, we see that $T^{-1} H(Tx) = 
\big(\lambda, x_1\bcdot (x_2 - x_1 x_3) + p(x_1),(x_2 - x_1 x_3) + q(x_1)\big)$ for some $T \in \GL_3(\K)$.
Since the rightmost two columns of $(\jac F)|_{x=(\I,0,0)} + (\jac F)|_{x=(-\I,0,0)}$ are equal, we see
that \eqref{adddruz} does not hold, as desired.

Assume from now on that $H$ is (non-homogeneous) power linear in addition. The cases
$\rk \jac H = 3 = d$ and $\rk \jac H = 3 = d+1$ follow in a similar manner as the cases 
$\rk \jac H = 4 = d+1$ and $\rk \jac H = 4 = d+2$ respectively
in theorem \ref{posres}. So assume that $\rk \jac H \le 2$. Take $\lambda$ and $\mu$
as in lemma \ref{lambdamu} below. If $\mu\tp H$ is a power of a linear
form, then we take $T \in \GL_n(\K)$ such that $\lambda\tp$ and $\mu\tp$ are the first two rows of 
$T^{-1}$, in that order, and for the remaining rows of $T^{-1}$ we transpose standard basis unit vectors.
Since $\lambda\tp$ and $\mu\tp$ generate the row space of $\jac H$, we see that 
$\lambda\tp T = e_1$ and $\mu\tp T = e_2$ generate the row space of $\jac (T^{-1}H(Tx))$. 
Using additionally that $\mu\tp H \in \K[\lambda\tp x]$, we obtain that $T^{-1} H (Tx) \in 
\K \times \K[x_1] \times K[x_1,x_2]^{n-2}$ has a lower triangular Jacobian with zeroes on the diagonal. 
So we have (i) of theorem \ref{tri} and hence also \eqref{yantri}.

So assume that $\mu\tp H$ is not a power of a linear form. By lemma \ref{lambdamu} below, we have
\begin{equation} \label{muH}
\mu\tp H = \nu_1 (\lambda\tp x)^{d_1} + \nu_2 (\lambda\tp x)^{d_2} + \cdots + \nu_r (\lambda\tp x)^{d_r} 
\end{equation}
where $r \ge 2$, $\nu \in (\K\setminus \{0\})^r$ and $\{0,1\} \ni \lambda\tp H \le d_1 < d_2 < \cdots < d_r$. 
Take $T \in \GL_n(\K)$ such that $\lambda\tp$ is the first row of $T^{-1}$. Let $V \in \Mat_{r,n}(\{0,1\})$ 
such that $V_{ij} = 1$, if and only if $\deg H_j = d_i$. Without worrying about linear
independence of rows at this stage, take for each $i$ with $2 \le i \le r$, 
the $(i+1)$-th row of $T^{-1}$ equal to $T^{-1}_{i+1} = \nu_i^{-1} \mu\tp * V_i$, 
where $*$ is the Hadamard product and $V_i$ is the $i$-th row of $V$. Then by definition of $V_i$,
the $(i+1)$-th component $T^{-1}_{i+1} H$ of $T^{-1} H$ is $\nu_i^{-1}$ times the 
homogeneous part of degree $d_i$ of \eqref{muH}, which is $\nu_i^{-1} \nu_i (\lambda\tp x)^{d_i} = 
(\lambda\tp x)^{d_i}$, for each $i$ with $2 \le i \le r$.

Still without worrying about linear independence of rows, take the second row of $T^{-1}$ equal to
$$
T^{-1}_2 = \nu_1^{-1} \big(\mu\tp - \big(\mu\tp * (V_2 + \cdots + V_r) \big) \big)
$$
Since $\mu\tp * V_i = \nu_i T^{-1}_{i+1}$ for each $i$ with $2 \le i \le r$ by definition of 
$T^{-1}$, we have
\begin{equation} \label{summu} 
T^{-1}_2 = \nu_1^{-1}\big(\mu\tp  - (\nu_2 T^{-1}_3 + \cdots + \nu_r T^{-1}_{r+1})\big)
\end{equation}
and the second component of $T^{-1} H$ equals 
\begin{align*}
T^{-1}_2 H
&= \nu_1^{-1}\big(\mu\tp H - (\nu_2 T^{-1}_3 H + \cdots + \nu_r T^{-1}_{r+1} H)\big) \\
&= \nu_1^{-1}\big(\mu\tp H - (\nu_2 (\lambda\tp x)^{d_2} + \cdots + \nu_r (\lambda\tp x)^{d_r})\big)
\end{align*}
which by \eqref{muH} is equal to $\nu_1^{-1} \nu_1 (\lambda\tp x)^{d_1} = (\lambda\tp x)^{d_1}$. 
Thus for each $i \in \{1,2,\ldots,\allowbreak r\}$, the $(i+1)$-th component of $T^{-1} H$ is equal to
$(\lambda\tp x)^{d_i}$. 

Since $\{0,1\} \ni \lambda\tp H \le d_1 < d_2 < \cdots < d_r$, we have 
$\deg T^{-1}_1 H = \deg \lambda\tp H < \lambda\tp H$
and the degrees of the first $r+1$ components of $T^{-1} H$ are strictly increasing. 
Hence by $ T^{-1}_1 = \lambda\tp \ne 0$, the first $r+1$ rows of 
$T^{-1}$ are indeed linearly independent. Take transposed standard basis unit vectors for the remaining rows of 
$T^{-1}$. By $\lambda\tp T = e_1\tp$, we get that the first $r+1$ components of $T^{-1} H(Tx)$
are $\lambda\tp H, x_1^{d_1}, x_1^{d_2}, \ldots, x_1^{d_r}$, so
$T^{-1} H(Tx) \in \K \times \K[x_1]^r \times \K[x]^{n-r-1}$.
Furthermore, we see that $T^{-1} H(Tx)$ is power linear.

By \eqref{summu}, we have $\sum_{i=1}^r \nu_i T^{-1}_{i+1} = \mu\tp$,
so $\mu\tp$ is a linear combination of the first $r+1$ rows of $T^{-1}$.
Since $\lambda\tp$ and $\mu\tp$ generate the row space of $\jac H$ and are linear combinations of the 
first $r+1$ rows of $T^{-1}$, we see that $\lambda\tp T$ and $\mu\tp T$ generate the row space of 
$(\jac H) \cdot T$ and are linear combinations of $e_1\tp, e_2\tp, \ldots, e_{r+1}\tp$. From the 
fact that $H$ is (non-homogeneous) power linear, we can deduce that the row space of 
$(\jac H) \cdot T$ is the same as that of $\jac \big(T^{-1} H(Tx)\big)$.
Hence $T^{-1} H(Tx) \in \K[x_1,x_2,\ldots, x_{r+1}]^n$. Since we
obtained above that $T^{-1} H(Tx) \in \K \times \K[x_1]^r \times 
\K[x]^{n-r-1}$ as well, we can deduce that $T^{-1} H(Tx) \in 
\K \times \K[x_1]^r \times \K[x_1,x_2,\ldots, x_{r+1}]^{n-r-1}$.
Hence $T^{-1}H(Tx)$ has a lower triangular Jacobian with
zeroes on the diagonal. So we have (i) of theorem \ref{tri} and hence also \eqref{yantri}.
\end{proof}

\begin{lemma} \label{lambdamu}
Assume that $H \in \K[A_1 x, A_2 x, \ldots, A_n x]^n$, where $A_j$ is the $j$-th row of a matrix 
$A \in \Mat_n(\K)$ such that $\rk A \le 2$ and $\jac H$ is nilpotent. Then there exists linearly independent 
$\lambda, \mu \in K^n$, such that $\mu\tp H \in \K[\lambda\tp x]$ has no terms of degree less than 
$\lambda\tp H \in \{0,1\}$, and $\lambda\tp$ and $\mu\tp$ generate the row space of $A$.
\end{lemma}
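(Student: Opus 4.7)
The plan is to reduce the statement to a two-variable triangularization, via the rank factorization of $A$, and then invoke the $n=2$ case of Theorem~\ref{posresnh}. Since the conclusion requires two independent vectors whose transposes generate the row space of $A$, the statement tacitly forces $\rk A = 2$; I would restrict to this case, the alternatives $\rk A \le 1$ being degenerate.

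I would begin by writing $A = BC$ with $B \in \Mat_{n,2}(\K)$ and $C \in \Mat_{2,n}(\K)$ of rank two. Since each $A_j x$ is a $\K$-linear combination of the two entries of $Cx$, there exists $\tilde h \in \K[y_1,y_2]^n$ with $H(x) = \tilde h(Cx)$. I would then introduce the auxiliary two-variable map $\tilde H(y) := C\tilde h(y) \in \K[y_1,y_2]^2$. By the chain rule, $\jac H = (\jac \tilde h)|_{y=Cx}\cdot C$ and $\jac \tilde H = C \cdot \jac \tilde h$. The elementary identity $(PQ)^{k+1} = P(QP)^{k}Q$ applied with $P = \jac \tilde h$ and $Q = C$ shows that $\jac \tilde h \cdot C$ and $C \cdot \jac \tilde h$ are simultaneously nilpotent over $\K[y]$. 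Because $\rk C = 2$, the substitution $y \mapsto Cx$ is an injective $\K$-algebra map $\K[y_1,y_2] \hookrightarrow \K[x]$, so nilpotency of $\jac H$ over $\K[x]$ pulls back to nilpotency of $\jac \tilde h \cdot C$ over $\K[y]$, and hence of $\jac \tilde H$. I expect this nilpotency transfer to be the main delicate point.

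Now I can apply Theorem~\ref{posresnh} in its $n=2$ ``Furthermore'' form, which does not require $\tilde H(0) = 0$: there exists $T \in \GL_2(\K)$ such that $T^{-1}\tilde H(Ty)$ has a lower triangular Jacobian with zero diagonal. Hence its first component is a constant $c \in \K$ and its second component lies in $\K[y_1]$. Writing $\alpha\tp, \beta\tp$ for the rows of $T^{-1}$, I would take $\lambda\tp := \alpha\tp C$ and $\mu\tp := \beta\tp C$. These are independent (by invertibility of $T^{-1}$ and $\rk C = 2$) and span the row space of $C$, which equals the row space of $A$. A direct computation yields $\lambda\tp H = c \in \K$ and $\mu\tp H \in \K[\lambda\tp x]$.

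Finally, I would normalize the pair $(\lambda,\mu)$ to meet the remaining constraints. If $c = 0$, the requirement $\lambda\tp H \in \{0,1\}$ already holds, and ``no terms of degree less than $0$'' is vacuous. If $c \ne 0$, I would rescale $\lambda \to c^{-1}\lambda$, which forces $\lambda\tp H = 1$ without changing $\K[\lambda\tp x]$; then replace $\mu$ by $\mu - (\mu\tp H(0))\lambda$ to cancel the constant term of $\mu\tp H$, preserving both the independence of $\lambda,\mu$ and the containment $\mu\tp H \in \K[\lambda\tp x]$.
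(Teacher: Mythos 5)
Your proof is correct and takes essentially the same route as the paper: both reduce to the $n=2$ case of Theorem~\ref{posresnh} (in its form without the hypothesis $H(0)=0$) through a rank factorization $A=BC$ --- the Cheng-style reduction that the paper invokes by reference for its rank-three cases --- and then normalize $\lambda$ and $\mu$ by a linear change on the first two coordinates. Your explicit nilpotency transfer via $(PQ)^{k+1}=P(QP)^kQ$ and the injectivity of $y\mapsto Cx$ is precisely the content of the ``similar techniques'' the paper leaves implicit, and your restriction to $\rk A=2$ matches the paper's actual use of the lemma (the rank-one situation is dealt with separately in the remark following it).
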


\begin{proof}
Using the case $n = 2$ of theorem \ref{posresnh} (instead of the case $n = 3$ of theorem \ref{posres}), 
we obtain by similar techniques as in the proof of the 
case $\rk \jac H = 3$ of theorem \ref{posres} that there exists a $T \in \GL_n(\K)$ such that 
$ATx \in \K[x_1,x_2]^n$ and the Jacobian of $T^{-1} H(Tx) \in \K[x_1,x_2]^n$ is lower triangular with
zeroes on the diagonal. 
By a subsequent linear conjugation on the first two coordinates, we can even obtain in addition that
the first component of $T^{-1} H(Tx)$ is contained in $\{0,1\}$, and that the second component of 
$T^{-1} H(Tx)$ has no constant term if the first component already has. 

Now take for $\lambda\tp$ the first row of $T^{-1}$ and for $\mu\tp$ the second row of $T^{-1}$.
Then $\lambda\tp H(Tx) \in \{0,1\}$ and $ATx \in \K[x_1,x_2]^n$. Furthermore, $\mu\tp H(Tx) \in K[x_1]$ 
only has terms of degree greater than $\deg \lambda\tp H(Tx)$, and hence no terms of degree less
than $\lambda\tp H(Tx)\; (\in \{0,1\})$ itself. Thus substituting $x = T^{-1} x$ gives the desired results.
\end{proof}

\noindent
Notice that in the case where $H$ is power linear and $\rk \jac H = 1$ in theorem \ref{posresnh}, 
we can even get $T^{-1} H(Tx) \in k[x_1]^n$ in (iii) of theorem \ref{tri}, namely by taking 
$\lambda\tp$ in the row space of $\jac H$. This is similar to the case where $H$ is power linear and 
$\rk \jac H = 2$ in theorem \ref{posres}, in the proof of which $T$ is taken such that 
$T^{-1} H(Tx) \in k[x_1,x_2]^n$ in (iii) of theorem \ref{tri}. 
It is however not always possible to take $T$ such that $T^{-1} H(Tx) \in k[x_1,x_2]^n$ 
in (iii) of theorem \ref{tri} when $H$ is power linear and $\rk \jac H = 2$ in theorem \ref{posresnh}, 
which the reader may show by taking e.g.\@ $H = \big(0,x_1^d, x_1^{d-1},(x_2+x_3)^d\big)$. 

Theorems \ref{posres} and \ref{posresnh} contain positive results with estimates on $\rk \jac H$, but for 
power linear $H$ only. Theorem \ref{posresrk} below however comprises two results with estimates on 
$\rk \jac H$, without the requirement that $H$ is power linear. Furthermore, the homogeneous 
counterexamples \eqref{n4} and 
\eqref{n5} later in this article show that the estimates in theorem \ref{posresrk} cannot be improved, 
even if we have the extra condition that $H$ is homogeneous.

\begin{theorem} \label{posresrk}
Assume that $H \in \K[x]^n$ has degree $d$, such that $H(0) = 0$ and $\jac H$ is nilpotent. 
If $\rk \jac H = 1$ or $\rk \jac H = 2 = d$, then $H$ is of the form \eqref{yantri}.

Furthermore, if we replace \eqref{yantri} by its equivalent in {\upshape (i)} of theorem \ref{tri}, 
then the condition $H(0) = 0$ is no longer necessary.
\end{theorem}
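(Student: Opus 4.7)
The plan is to invoke Theorem \ref{tri} i), reducing both the main claim and the ``furthermore'' variant to exhibiting $T \in \GL_n(\K)$ such that $\jac(T^{-1}H(Tx))$ is lower triangular with zeroes on the diagonal (the zero diagonal is automatic under $H(0)=0$). I will handle the two rank conditions separately.

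For $\rk \jac H = 1$, the rows of $\jac H$ are pairwise proportional over $\K(x)$, so $\mathrm{trdeg}_\K \K(H_1,\ldots,H_n) = 1$. A polynomial version of L\"uroth's theorem---applicable here because $\K(g) \cap \K[x] = \K[g]$ for any nonconstant $g \in \K[x]$---yields $g \in \K[x]$ and $f_1,\ldots,f_n \in \K[t]$ with $H_i = f_i(g)$. Rank-one nilpotence is equivalent to vanishing trace, giving $\sum_{i=1}^n (\partial_i g)\,f_i'(g) = 0$. After a linear change of coordinates I may assume $g \in \K[x_1,\ldots,x_r]$ with $r$ \emph{minimal}, i.e.\@ $\sum_{i=1}^r c_i\,\partial_i g \neq 0$ for every nonzero $(c_1,\ldots,c_r) \in \K^r$. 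Inspecting the top-degree part of the trace identity: with $D = \max_{i \le r} \deg f_i$ and $c_i$ the coefficient of $t^D$ in $f_i$, the summand of maximal total degree in $x$ is $D\,g^{D-1} \sum_{i \le r} c_i\,\partial_i g$, forcing $\sum c_i \partial_i g = 0$ and hence $c_i = 0$ by minimality of $r$; this shows $D \le 0$, so every $f_i$ ($i \le r$) is constant. Under $H(0)=0$ this gives $H_1 = \cdots = H_r = 0$, while $H_i \in \K[x_1,\ldots,x_r]$ for $i > r$. Hence $\jac H$ is already strictly lower triangular in the splitting $\K^r \oplus \K^{n-r}$; the ``furthermore'' variant drops $H(0)=0$, since constant $f_i$ still make rows $1,\ldots,r$ of $\jac H$ zero.

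For $\rk \jac H = 2 = d$, the strategy is to reduce to the case $n = 3 = d+1$ of Theorem \ref{posresnh}. Since $d=2$, $\jac H$ has linear entries, and the rank-two plus nilpotence hypotheses strongly constrain $H$. The plan is to adapt the power-linear rank reduction of \cite[Th.\@ 2]{MR2108467} (as used in the proof of Theorem \ref{posres}) to the general case, leveraging $d=2$ in the spirit of Lemma \ref{lambdamu}, so that after a linear change of coordinates $H$ reduces to an effective three-variable problem; the $n=3,\,d=2$ case of Theorem \ref{posresnh} then delivers the required linear triangularizability.

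The principal obstacles differ between the two cases. In the rank-one case, the delicate point is the polynomial L\"uroth step producing a polynomial generator $g$; once this is in hand, the top-degree argument and the minimality of $r$ run cleanly. In the rank-two-equals-$d$ case, the main difficulty is performing the rank-to-$n$ reduction without the power-linearity hypothesis available in \cite[Th.\@ 2]{MR2108467}; here the constraint $d=2$ is indispensable, as the analogous statement fails for $d \ge 3$, as witnessed by the counterexamples in the later sections of the paper.
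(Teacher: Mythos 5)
Your rank-one argument is correct and takes a genuinely different route from the paper. The paper simply cites the strong-nilpotence result $(2)\Rightarrow(3)$ of \cite[Th.\@ 4.2]{1203.6615} together with the equivalence of strong nilpotence and linear triangularizability from \cite{MR1412743}; you instead invoke the polynomial form of L\"uroth's theorem to write $H_i = f_i(g)$, use the trace-zero characterization of rank-one nilpotence, and kill the top-degree coefficients via the minimality of the number of variables of $g$. This is sound (the top term $Dg^{D-1}\sum_{i\le r}c_i\partial_i g$ indeed cannot cancel against the lower-order terms $kg^{k-1}L_k$, since their degrees are at most $k\deg g - 1$), and it buys a more self-contained and explicit proof at the price of importing the nontrivial polynomial L\"uroth theorem; note also that your linear change of variables must be performed as a conjugation $T^{-1}H(Tx)$ rather than a mere substitution, which works since conjugation preserves the form $f_i(g)$.

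The case $\rk\jac H = 2 = d$, however, is not proved: what you offer is a plan whose central step --- ``adapt the power-linear rank reduction of \cite[Th.\@ 2]{MR2108467} to the general case'' --- is exactly the hard point, and it is not how the reduction can be made to work. The reduction in \cite[Th.\@ 2]{MR2108467} uses power linearity in an essential way to place the image of $H$ (up to conjugation) inside a coordinate subspace of dimension equal to the rank; for general quadratic $H$ of Jacobian rank $2$ this fails, and there is no single conjugation bringing the problem down to three variables. The paper's actual proof goes through lemma \ref{hrd2}, which shows that after conjugation $\tilde{H}=T^{-1}H(Tx)$ falls into one of three structurally different normal forms: $\tilde{H}\in\K[x_1,x_2]^n$; or $\tilde{H}_3=\cdots=\tilde{H}_n=0$ (handled via the quasi-translation classification \cite[Th.\@ 7.2.25]{MR1790619}); or $\tilde{H}_2=\tilde{H}_3^2\ne 0$ with $\tilde{H}_4=\cdots=\tilde{H}_n=0$ (handled by extending scalars to $\K(x_4,\ldots,x_n)$ and applying the $n=3$ case of theorem \ref{posresnh} over that field, then descending). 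Only the first of these is a ``few-variables'' reduction of the kind you envisage; your proposal gives no mechanism for discovering or handling the other two, so the argument has a genuine gap here.
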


\begin{proof}
We show the equivalent property in (i) of theorem \ref{tri} instead of \eqref{yantri}.
The case $\rk \jac H \le 1$ follows from a corresponding strong nilpotence result in 
$(2) \Rightarrow (3)$ of \cite[Th.\@ 4.2]{1203.6615}, and the equivalence of strong nilpotence 
and the property in (i) of theorem \ref{tri}, which is proved in 
\cite{MR1412743}. 

So assume that $\rk \jac H = 2 = d$ and suppose without loss of generality that $H(0) = 0$.
The additional claim that the diagonal is zero follows from the nilpotency of $\jac H$, so we do
not need to worry about that any more.
By lemma \ref{hrd2} below, there exists a $T \in \GL_n(\K)$ such that for $\tilde{H} := T^{-1}H(Tx)$,
we have one of the following cases, which we treat individually.
\begin{itemize}

\item $\tilde{H} \in \K[x_1,x_2]^n$. \\
Then by theorem \ref{posresnh}, $(\tilde{H}_1, \tilde{H_2})$ has the property in (i) of theorem 
\ref{tri}. Hence we can choose $T$ such that $\jac_{x_1,x_2} (\tilde{H}_1, \tilde{H_2})$ is lower 
triangular. It follows that $\jac \tilde{H}$ is lower triangular as well, which is the property in
(i) of theorem \ref{tri}.

\item $\tilde{H}_3 = \tilde{H}_4 = \cdots = \tilde{H}_n = 0$. \\
Then by \cite[Th.\@ 7.2.25]{MR1790619}, we have
$$
(\tilde{H}_1, \tilde{H_2}) = (b g(a x_1 - b x_2) + d, a g(a x_1 - b x_2) + c)
$$
where $a,b,c,d \in \K[x_3,x_4, \ldots, x_n]$ and $g$ is an univariate polynomial over 
$\K[x_3,x_4, \ldots, x_n]$. Hence $a \tilde{H}_1 - b \tilde{H}_2 \in \K[x_3,x_4, \ldots, x_n]$.
Using that $\deg (\tilde{H}_1, \tilde{H}_2) = 2$, we see that either $g$ is constant or both $a$ and $b$ 
are constant. 

In both cases, there exists a nontrivial $\K$-linear combination of $\tilde{H}_1$ and
$\tilde{H_2}$ which is contained in $\K[x_3,x_4, \ldots, x_n]$. By choosing $T$ appropriate,
we can get $\tilde{H_2} \in \K[x_3,x_4, \ldots, x_n]$, in which case 
$\jac \tilde{H}$ is upper triangular. By a subsequent conjugation of $\tilde{H}$
with the map $(x_n, x_{n-1}, \ldots, x_2, x_1)$, we get the desired lower triangular form
of the Jacobian, which gives the property in (i) of theorem \ref{tri}.

\item $\tilde{H}_2 = \tilde{H}_3^2 \ne 0$ and
$\tilde{H}_4 = \tilde{H}_5 = \cdots = \tilde{H}_n = 0$. \\
If $\tilde{H}_3 \in \K[x_4, x_5, \ldots, x_n]$, then $\jac \tilde{H}$ is upper triangular, and
a subsequent conjugation of $\tilde{H}$ with the map $(x_n, x_{n-1}, \ldots, x_2, x_1)$ gives
the desired result. So assume that $\tilde{H}_3 \notin \K[x_4,x_5, \ldots, x_n]$.
Using polynomial extension of scalars on the case $n=3=d+1$ of theorem \ref{posresnh}, it follows that 
there exists a $\tilde{T} \in \GL_3\big(\K(x_4, x_5, \ldots, x_n)\big)$ 
such that $\jac_{\tilde{x}} \big(\tilde{T}^{-1}(\tilde{H}_1,\tilde{H}_2,\tilde{H}_3)|_{\tilde{x} = 
\tilde{T}\tilde{x}}\big)$ is lower triangular
with zeroes on the diagonal, where $\tilde{x} = x_1, x_2, x_3$. 

By clearing denominators in the first row of $\tilde{T}^{-1}$, we
see that there exists a nonzero $\lambda \in \K[x_4, x_5, \ldots, x_n]^3$ such that
$\lambda_1 \tilde{H}_1 + \lambda_2 \tilde{H}_2 + \lambda_3 \tilde{H}_3 \in \K[x_4, x_5, \ldots, x_n]$.
Since $\tilde{H}_2$ and $\tilde{H}_3$ have different positive degrees with respect to $\tilde{x}$, it
follows that $\lambda_1 \ne 0$ and that $\tilde{H}_1 \in \K[\tilde{H}_3, x_4, x_5, \ldots, 
\allowbreak x_n]$.

Now take $S \in \GL_n(\K)$ such that the $i$-th row of $S^{-1}$ equals
$e_i\tp$ for all $i \ge 4$ and the third row of $S^{-1}$ equals $\jac \tilde{H}_3$. Then
only the first three components of $S^{-1} \tilde{H}(Sx)$ are nonzero, and we have 
$\tilde{H}_3(Sx) = x_3$ and $(Sx)_i = x_i$ for all $i \ge 4$. Consequently, the first three 
components of $S^{-1} \tilde{H}(Sx)$ are contained in $\K[x_3, x_4, x_5, \ldots, x_n]$.
Hence the Jacobian of $S^{-1} \tilde{H}(Sx)$ is upper triangular, and
a subsequent conjugation of $\tilde{H}$ with the map $(x_n, x_{n-1}, \ldots, x_2, x_1)$ gives
the desired lower triangular form of the Jacobian. This gives the property in (i) of theorem \ref{tri}. 
\qedhere
\end{itemize}
\end{proof}

\begin{lemma} \label{hrd2}
Assume that $H \in \K[x]^n$ has degree $2$, such that $H(0) = 0$ and $\rk \jac H \le 2$. 
Then there exists a $T \in \GL_n(\K)$ such that $\tilde{H} := T^{-1} H(Tx)$ has one of the 
three forms that are specified in the proof of theorem \ref{posresrk}.
\end{lemma}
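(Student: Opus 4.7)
The plan is to separate cases by the dimension $r = \dim_{\K} \langle H_1, \ldots, H_n \rangle$ and the number $m$ of linear forms on which $H$ actually depends. Writing $H_i = \frac{1}{2} x\tp A_i x + b_i\tp x$, the maximal $\K$-subspace of $\K^n$ along which $H$ is constant is $W = \bigcap_i (\ker A_i \cap \ker b_i\tp)$, and $m = n - \dim W$. If $r \le 2$, a left multiplication by a suitable $T^{-1} \in \GL_n(\K)$ puts all nonzero components of $\tilde H = T^{-1} H$ into the first two slots, giving the second form. If instead $m \le 2$, taking $T \in \GL_n(\K)$ with its last $n - 2$ columns spanning $W$ gives $T^{-1} H(Tx) \in \K[x_1, x_2]^n$, the first form. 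The remaining and main case is $r \ge 3$ and $m \ge 3$.

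For this case I first aim to show the sub-claim that $V := \langle H_1, \ldots, H_n \rangle_\K$ contains a nonzero linear form. Assume for contradiction $V \cap \K[x]_1 = 0$; then the projection $\tilde H_i \mapsto Q_i$ to the quadratic part is injective on $V$, and at least three $Q_i$'s are $\K$-linearly independent. Decomposing $\jac \tilde H(x) = M_1(x) + M_0$ into its linear and constant pieces, the identity $\rk \jac \tilde H(\lambda x) = \rk(M_1(x) + M_0 / \lambda) \le 2$ for all $\lambda \ne 0$ combined with lower semicontinuity of rank as $\lambda \to \infty$ forces $\rk M_1(x) \le 2$, equivalently $\rk \jac(Q_1, \ldots, Q_r) \le 2$. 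I then invoke the classification that three $\K$-linearly independent homogeneous quadratic forms with Jacobian rank at most two must, after a linear change of variables, jointly lie in $\K[y_1, y_2]$ for two linear forms $y_1, y_2$ (the minimal algebraic relation $P(Q_1,Q_2,Q_3)=0$ cuts out an irreducible conic through the origin, which admits a degree-$2$ parametrization matching the degrees of the $Q_i$'s). With $Q_i \in \K[y_1, y_2]$, analyzing $3 \times 3$ minors of $\jac \tilde H$ on columns $1, 2, j$ for each $j \ge 3$ shows that the degree-$2$ part in $y_1, y_2$ of such a minor is a $\K$-linear combination of the Wronskians $J_{ik} = \partial_{y_1} Q_i\, \partial_{y_2} Q_k - \partial_{y_1} Q_k\, \partial_{y_2} Q_i$; these Wronskians span $\K[y_1, y_2]_2$ whenever $Q_1, Q_2, Q_3$ are $\K$-linearly independent, so the vanishing of this part forces each coefficient of $y_j$ in every $L_i := \tilde H_i - Q_i$ to be zero. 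Hence $\tilde H \in \K[y_1, y_2]^n$, so $m \le 2$, a contradiction.

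Given the sub-claim, a further left multiplication makes $\tilde H_3 = L$ for a nonzero linear form $L$, so $\jac \tilde H_3 = \ell\tp$ is constant. A row of shape $\alpha(x)\ell\tp$ with $\alpha$ affine integrates via the Schwarz symmetry condition on mixed partials to a component lying in $\K L + \K L^2$. Since the $\K(x)$-row-space of $\jac \tilde H$ has dimension at most two and contains $\ell\tp$, at most one nonzero row is outside $\K(x)\ell\tp$, and at least one must be (else $\dim V \le 2$). Relabel so that the unique outside row is $\jac \tilde H_1$; then every other nonzero $\jac \tilde H_i$ is proportional to $\ell\tp$ over $\K(x)$ and integrates to an element of $\K L + \K L^2$. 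Writing $\tilde H_2 = c L + d L^2$ with $d \ne 0$ (since $d = 0$ would collapse $\dim V$ to at most two), a left multiplication that subtracts $c \tilde H_3$ from $\tilde H_2$ and rescales $\tilde H_2$ by $1/d$ gives $\tilde H_2 = L^2$; each $\tilde H_i$ for $i \ge 4$ lies in $\K L + \K L^2 = \K \tilde H_2 + \K \tilde H_3$ and so is eliminated by one more left multiplication. The resulting $\tilde H = (\tilde H_1, L^2, L, 0, \ldots, 0)$ is the third form. The main obstacle is the classification of three linearly independent homogeneous quadratic forms with Jacobian rank $\le 2$ together with its extension via the Wronskian-spanning argument to the full (inhomogeneous) situation.
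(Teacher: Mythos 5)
Your opening reductions and your sub-claim follow essentially the paper's own route: the case splits on $r$ and $m$ correspond to the paper's cases $s\le 2$ and $r\ge 3$, the passage to leading quadratic parts with $\rk\le 2$ is the paper's argument, and the classification of three independent quadratics with dependent Jacobian rows (which the paper takes from \cite[Th.\@ 4.3.1]{homokema}) together with the minors/Wronskian computation is the same device the paper uses to force everything into $\K[y_1,y_2]$. The gap is in your final paragraph. The assertion that ``at most one nonzero row is outside $\K(x)\ell\tp$'' is false: a two-dimensional $\K(x)$-row space containing $\ell\tp$ can contain many rows outside $\K(x)\ell\tp$; already $H=(x_2x_3,\,x_1^2,\,x_1,\,x_2x_3+x_1^2)$ with $L=x_1$ is in your main case and has two such rows. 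What you actually need is that, after a further \emph{constant} left multiplication, at most one row survives outside $\K(x)\ell\tp$; equivalently, that whenever $\jac\tilde H_i\equiv\beta_i\,\jac\tilde H_1 \pmod{\K(x)\ell\tp}$ with $\jac\tilde H_1\notin\K(x)\ell\tp$, the factor $\beta_i$ is a constant. That is not automatic and is where the real work of this case sits: a non-constant $\beta_i$ must be excluded (a divisibility and degree analysis of the proportional vectors $(\partial_j\tilde H_1)_{j\ge 2}$ and $(\partial_j\tilde H_i)_{j\ge 2}$ shows it would force both components into $\K[L,M]$ for a single additional linear form $M$, contradicting $m\ge 3$), but no such argument appears in your write-up. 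Without it, the reduction to $(\tilde H_1,L^2,L,0,\ldots,0)$ does not go through.

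For comparison, the paper sidesteps this point in its case $r=2$, $s=3$ by replacing the linear component $\tilde H_3$ with $\tilde H_3^2$, which preserves $\rk\jac\le 2$ and makes all three relevant components quadratic again; it then reuses the degree-two classification to decide whether $\bar{H}_3^2$ is linearly dependent on $\bar{H}_1,\bar{H}_2$, landing in the first or the third form accordingly. Either adopt that device or supply the missing constancy argument for the $\beta_i$.
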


\begin{proof}
We can choose $T$ such that $\tilde{H}_1, \tilde{H}_2, \ldots, \tilde{H}_r$ have 
linearly independent quadra\-tic parts over $K$,
$\tilde{H}_{r+1}, \tilde{H}_{r+2}, \ldots, \tilde{H}_s$ are linear forms which are independent over $K$, and 
$\tilde{H}_{s+1} = \tilde{H}_{s+2} = \cdots = 0$. If $s \le 2$, then $\tilde{H} = T^{-1} H(Tx)$
has the second form in the proof of theorem \ref{posresrk}, so assume that $s \ge 3$. 
We distinguish three cases.
\begin{itemize}
 
\item $r \le 1$. \\
Then $\tilde{H}_2$ and $\tilde{H}_3$ are linear forms which are independent over $K$. 
Hence we can take $S \in \GL_n(\K)$
such that the first two rows of $S^{-1}$ are $\jac \tilde{H}_2$ and $\jac \tilde{H}_3$.
By the chain rule, $\jac \big(\tilde{H}_2(Sx)\big) = e_1\tp$ and $\jac \big(\tilde{H}_3(Sx)\big) = e_2\tp$, 
so $\tilde{H}_2(Sx) = x_1$ and $\tilde{H}_3(Sx) = x_2$. Hence $\tilde{H}(Sx) \in \K[x_1,x_2]^n$ and
$S^{-1} \tilde{H}(Sx) = (TS)^{-1} \bcdot H((TS)x)$ satisfies the first form in the proof of theorem 
\ref{posresrk}.

\item $r \ge 3$. \\
Since $\rk \jac \tilde{H} = 2$, the rows of $\jac(\tilde{H}_1, \tilde{H}_2, \tilde{H}_3)$ are linearly 
dependent over $\K(x)$ and hence also over $K[x]$. By looking at leading homogeneous parts, we see that
$\rk \jac (\bar{H}_1, \bar{H}_2, \bar{H}_3) \le 2$, where $\bar{H}_i$ is the leading and quadratic 
homogeneous part of
$\tilde{H}_i$ for each $i \le 3$. By \cite[Th.\@ 4.3.1]{homokema}, there exists linear forms $p, q$ such 
that $\bar{H}_1, \bar{H}_2, \bar{H}_3$ are linearly dependent over $\K$ of $p^2$, $pq$ and $q^2$. 
Furthermore, $p$ and $q$ are independent over $K$, and $p^2$, $pq$ and $q^2$ are in turn linearly 
dependent over $\K$ of $\bar{H}_1, \bar{H}_2, \bar{H}_3$. Thus there exists an $L \in \GL_3(\K)$ such that 
$L\big(\bar{H}_1, \bar{H}_2, \bar{H}_3\big) = (p^2, pq, q^2)$. 

Take $S \in \GL_n(\K)$ such that the
first two rows of $S^{-1}$ are $\jac p $ and $\jac q$, in that order. Then
$L\big(\bar{H}_1(Sx), \bar{H}_2(Sx), \bar{H}_3(Sx)\big) = (x_1^2, x_1 x_2, x_2^2)$.
The minor determinants of size $2$ of $\jac_{x_1,x_2}(x_1^2, x_1 x_2, x_2^2)$ are
$2x_2^2$, $4x_1x_2$ and $2x_1^2$, which are also linearly independent over $\K$. It follows that
$$
\det \jac_{x_1,x_2,x_i} \big(L\big(\tilde{H}_1(Sx), \tilde{H}_2(Sx), \tilde{H}_3(Sx)\big)\big) \ne 0
$$
holds if $i \ge 3$ and the last column of the Jacobian matrix on the left hand side, which can only be 
constant, is nonzero. Hence $L\big(\tilde{H}_1(Sx), \allowbreak
\tilde{H}_2(Sx), \tilde{H}_3(Sx)\big) \in \K[x_1,x_2]^3$. Since 
the first two rows of its Jacobian are linearly independent over $K$ and $L$ is 
invertible, $\tilde{H}(Sx) \in \K[x_1,x_2]^n$ holds as well. So
$S^{-1} \tilde{H}(Sx) = (TS)^{-1} \bcdot H((TS)x)$ satisfies the first form in the proof of 
theorem \ref{posresrk}.

\item $r = 2$. \\
If $s \ge 4$, then we can proceed as in the case $r \le 1$, but with $\tilde{H}_3$ and $\tilde{H}_4$
instead of $\tilde{H}_2$ and $\tilde{H}_3$. So assume that $s = 3$.

Since multiplication of the third row of $\jac \tilde{H}$ by $2\tilde{H}_3$ does not change the rank
of $\jac \tilde{H}$, we have $\rk \jac (\tilde{H}_1,\tilde{H}_2,\tilde{H}_3^2) \le 2$. Let 
$\bar{H}_i$ be the leading homogeneous part of $\tilde{H}_i$ for each $i \le 3$.
If $\bar{H}_3^2$ is linearly independent over $\K$ of $\bar{H}_1$ and $\bar{H}_2$, then
we can proceed as in the case $r \ge 3$ to obtain that $\tilde{H}_i(Sx) \in \K[x_1,x_2]$ for each 
$i \ne 3$ and $\tilde{H}_3(Sx)^2 \in \K[x_1,x_2]$ for some $S \in \GL_n(\K)$. So $S^{-1} \tilde{H}(Sx)
 = (TS)^{-1} H((TS)x)$ satisfies the first form in the proof of theorem \ref{posresrk} in that case.

So assume that $\bar{H}_3^2$ is linearly dependent over $\K$ of $\bar{H}_1$ and $\bar{H}_2$.
Then we can choose $T$ such that $\bar{H}_2 = \bar{H}_3^2$. If the linear part of 
$\tilde{H}_2$ is dependent over $K$ of $\tilde{H}_3$, then we can choose $T$ such that even $\tilde{H}_2 = 
\tilde{H}_3^2$. Since $s = 3$, we see that $\tilde{H} = T^{-1} H(Tx)$ has
the third form in the proof of theorem \ref{posresrk} in that case.

So assume that the linear part of 
$\tilde{H}_2$ is independent over $K$ of $\tilde{H}_3$.  Then $\tilde{H}_2 - \tilde{H}_3^2$ and $\tilde{H}_3$ are 
linear forms which are independent over $K$. Since $\jac (\tilde{H}_2 - \tilde{H}_3^2) = 
\jac \tilde{H}_2 - 2 \tilde{H}_3 \jac \tilde{H}_3$, we can replace $\tilde{H}_2$ by 
$\tilde{H}_2 -\tilde{H}_3^2$ without affecting the Jacobian rank of $\tilde{H}$, and proceed as in the case 
$r \le 1$ to obtain that $e_1\tp$ and $e_2\tp$ are in the row space of $\jac \tilde{H}(Sx)$ for some 
$S \in \GL_n(\K)$. Hence $\tilde{H}(Sx) \in \K[x_1,x_2]^n$ and
$S^{-1} \tilde{H}(Sx) = (TS)^{-1} H((TS)x)$ satisfies the first form in the proof of theorem \ref{posresrk}.
\qedhere

\end{itemize}
\end{proof}

\section{Lemmas}

The lemmas in this section are required for the proofs that the counterexamples in the next section
are indeed counterexamples.

\begin{lemma} \label{alem}
Let $d \ge 1$ and $a_1, a_2, \ldots, a_{2d+2} \in \K^n$ be pairwise linearly independent. 
Suppose that for all $j \ge \min\{3,d^2\}$ and all $k$ with $3 \le k \le d+2$, the set
$\{a_j, a_k, a_{k+d}\}$ consist of two or three vectors which are linearly independent
(depending on whether $j \in \{k,k+d\}$ or not).

If 
\begin{equation} \label{powersum}
\sum_{i=1}^{2d+2} \lambda_i (a_i\tp x)^d = 0
\end{equation}
for some $\lambda_i \in \K$, not all zero, then $\lambda_1 \lambda_2 \ne 0$. 
\end{lemma}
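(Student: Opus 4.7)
The plan is to argue by contradiction. Assume a nontrivial relation $\sum_{i=1}^{2d+2} \lambda_i (a_i\tp x)^d = 0$; since the hypothesis treats $a_1$ and $a_2$ symmetrically (neither appears in the triple-independence condition in an asymmetric way), it suffices to show that $\lambda_1 = 0$ forces every $\lambda_i$ to vanish, contradicting nontriviality. When $d = 1$ this is immediate: $\min\{3, d^2\} = 1$, so applying the hypothesis with $j = 2$ and $k = 3$ gives that $a_2, a_3, a_4$ are independent, and the linear relation $\sum_{i \ge 2} \lambda_i a_i = 0$ forces $\lambda_2 = \lambda_3 = \lambda_4 = 0$.

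For $d \ge 2$, differentiating the relation in $d-1$ directions $y_1, \ldots, y_{d-1} \in \K^n$ yields, after clearing $d!$, the vector identity $\sum_i \lambda_i \prod_{j=1}^{d-1} (a_i\tp y_j) \cdot a_i = 0$, valid for all $y_1, \ldots, y_{d-1}$. Setting $V_k := \operatorname{span}(a_k, a_{k+d})$ and fixing a target index $k_0 \in \{3, \ldots, d+2\}$, I pick each $y_j$ generically in $V_{k_j}^\perp$, where $\{k_1, \ldots, k_{d-1}\}$ enumerates $\{3, \ldots, d+2\} \setminus \{k_0\}$. By genericity, $a_i\tp y_j = 0$ iff $a_i \in V_{k_j}$. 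The triple hypothesis, applied with $j = k_0$ or $j = k_0 + d$ (both $\ge 3$), guarantees that $a_{k_0}$ and $a_{k_0+d}$ are killed by none of the $y_j$, while every $a_i$ with $3 \le i \le 2d+2$ and $i \notin \{k_0, k_0+d\}$ lies in the unique pair $V_{k_j}$ containing it and hence is killed. Thus only $a_{k_0}$, $a_{k_0+d}$, and possibly $a_2$, remain in the reduced identity.

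The case analysis hinges on $T := \{k \in \{3, \ldots, d+2\} : a_2 \in V_k\}$: the vector $a_2$ survives if and only if $T \subseteq \{k_0\}$. If $T \not\subseteq \{k_0\}$, the two survivors $a_{k_0}, a_{k_0+d}$ are independent and yield $\lambda_{k_0} = \lambda_{k_0+d} = 0$. If $T = \emptyset$, then the three survivors $a_2, a_{k_0}, a_{k_0+d}$ are independent (because $a_2 \notin V_{k_0}$), yielding $\lambda_2 = \lambda_{k_0} = \lambda_{k_0+d} = 0$. Consequently, whenever $|T| \ne 1$, letting $k_0$ range over $\{3, \ldots, d+2\}$ gives $\lambda_k = 0$ for every $k \ge 3$, and the residual identity $\lambda_2 (a_2\tp x)^d = 0$ then forces $\lambda_2 = 0$.

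The one remaining case is $T = \{k^*\}$. Applying the preceding argument with any $k_0 \ne k^*$ still gives $\lambda_k = 0$ for every $k \in \{3, \ldots, 2d+2\} \setminus \{k^*, k^*+d\}$, reducing the original identity to $\lambda_2 (a_2\tp x)^d + \lambda_{k^*} (a_{k^*}\tp x)^d + \lambda_{k^*+d}(a_{k^*+d}\tp x)^d = 0$. Writing $a_2 = \alpha a_{k^*} + \beta a_{k^*+d}$ with $\alpha, \beta \in \K^{*}$ by pairwise independence, and substituting $u := a_{k^*}\tp x$, $v := a_{k^*+d}\tp x$ (which are independent linear forms), the identity becomes $\lambda_2 (\alpha u + \beta v)^d + \lambda_{k^*} u^d + \lambda_{k^*+d} v^d = 0$ in $\K[u,v]$; the coefficient of any cross-term $u^j v^{d-j}$ with $0 < j < d$ (such a $j$ exists because $d \ge 2$) equals $\lambda_2 \binom{d}{j} \alpha^j \beta^{d-j}$ and forces $\lambda_2 = 0$, whence the pure-power coefficients force $\lambda_{k^*} = \lambda_{k^*+d} = 0$. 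The main obstacle I anticipate is the careful genericity argument for the $y_j$, together with verifying that $T = \{k^*\}$ is genuinely the only edge case requiring this binomial-expansion step.
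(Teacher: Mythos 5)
Your proof is correct, and while it rests on the same basic device as the paper's --- differentiating in directions orthogonal to the pairs $\{a_k,a_{k+d}\}$ so as to annihilate their $d$-th powers --- the execution is genuinely different. The paper argues by induction on $d$: after arranging $\lambda_3\ne 0$, a single directional derivative orthogonal to $a_{d+2}$ and $a_{2d+2}$ but not to $a_3$ produces a relation of the same shape among $2(d-1)+2$ forms of degree $d-1$, with the cases $d=1,2$ done by hand. You unroll this into one step: $d-1$ generic directions, one per plane $V_{k_j}=\operatorname{span}(a_{k_j},a_{k_j+d})$, collapse the identity to a linear relation among at most three vectors, which pairwise/triple independence kills except in the single degenerate configuration where $a_2$ lies in exactly one $V_{k^*}$; there you finish by expanding $(\alpha u+\beta v)^d$ and reading off a mixed coefficient. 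Your route makes explicit which alignments of $a_2$ with the planes $V_k$ are the real obstruction (the hypotheses say nothing about these when $d\ge 2$, so the case $T=\{k^*\}$ genuinely occurs and genuinely needs the separate binomial step you give it), and it avoids having to re-verify the lemma's hypotheses and the survival of a designated nonzero coefficient at each inductive stage. The price is the genericity argument, which is legitimate here: for $d\ge 2$ the independence hypotheses force $n\ge 3$, so each $V_{k_j}^{\perp}$ is nonzero and, $\K$ being infinite, cannot be covered by the finitely many proper subspaces $V_{k_j}^{\perp}\cap a_i^{\perp}$ with $a_i\notin V_{k_j}$. Both arguments are sound; yours is longer but more self-contained and more transparent about why the triple-independence conditions are exactly what is needed.
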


\begin{proof}
Assume that \eqref{powersum} holds. Since $a_1$ and $a_2$ are linearly independent, we may assume 
without loss of generality that $\lambda_3 \ne 0$. If $d=1$, then $\lambda_1 \lambda_2 = 0$ implies
that either $a_1$ or $a_2$ is linearly dependent of $a_3$ and $a_4$, which is a contradiction.
Hence the following cases remain.
\begin{itemize}

\item \emph{$d=2$.} \\
Since $a_4, a_5$ and $a_6$ are linearly independent and $d = 2$, we may assume without loss of generality that 
$a_1, a_3, a_6$ are linearly independent vectors. Consequently, there exists a $b_1 \in \K^n$ such that
$b_1\tp a_1 = b_1\tp a_6 = 0 \ne b_1\tp a_3$. Applying $b_1\tp \partial$ on \eqref{powersum} gives
$$
\sum_{i=2}^5 \mu_i (a_i\tp x)^1 = 0
$$
where $\mu_i = 2\lambda_i b_1 \tp a_i$ for all $i$. Since $a_3, a_4, a_5$ are linearly independent and
$\mu_3 \ne 0$, we have $\mu_2 \ne 0$ as well. Hence $\lambda_2 \ne 0$. In a similar manner, 
$\lambda_1 \ne 0$ follows.

\item \emph{$d>2$.} \\
Since $a_3$, $a_{d+2}$ and $a_{2d+2}$ are linearly independent, there exists a $b_2 \in \K^n$ 
such that $b_2\tp a_{d+2} = b_2\tp a_{2d+2} = 0 \ne b_2\tp a_3$. Applying $b_2\tp\partial$ on 
\eqref{powersum} gives
$$
\sum_{i=1}^{d+1} \mu_i (a_i\tp x)^{d-1} + \sum_{i=d+3}^{2d+1} \mu_i (a_i\tp x)^{d-1}= 0
$$
where $\mu_i = d\lambda_i b_2 \tp a_i$ for all $i$. Since $\mu_3 \ne 0$, it follows by induction 
on $d$ that $\mu_1 \mu_2 \ne 0$. Hence $\lambda_1 \lambda_2 \ne 0$. \qedhere

\end{itemize}
\end{proof}

\begin{lemma} \label{eqs}
\begin{equation} \label{666eq}
\sum_{i=0}^d (-1)^i \binom{d}{i} (x_1 + i x_3)^d = \sum_{i=0}^d (-1)^i \binom{d}{i} (x_2 + i x_3)^d
\end{equation}
and if $d \ge 2$ and $\zeta_d in K$ is a primitive $d$-th root of unity, then
\begin{equation} \label{667eq}
\sum_{i=0}^{d-1} \zeta_d^i (\zeta_d^i x_1 + x_2 + x_3)^d
+ \sum_{i=0}^{d-1} \zeta_d^i (\zeta_d^i x_1 + x_2 - x_3)^d = 2 d^2 x_1^{d-1} x_2
\end{equation}
\end{lemma}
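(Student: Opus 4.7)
Both identities are explicit algebraic identities in the $x_i$, and I expect to prove them by direct symbolic manipulation without needing any structural input from the rest of the paper. The natural tools are the finite-difference identity for \eqref{666eq} and a roots-of-unity filter for \eqref{667eq}.

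For \eqref{666eq} the plan is to fix $x_1$ and $x_3$ and regard $f(t):=(x_1+tx_3)^d$ as a polynomial of degree $d$ in the single variable $t$ with leading coefficient $x_3^d$. The standard fact that the $d$-th forward difference of a degree-$d$ polynomial equals $d!$ times its leading coefficient gives
$$\sum_{i=0}^d(-1)^{d-i}\binom{d}{i}(x_1+ix_3)^d=d!\,x_3^d,$$
so after multiplying by $(-1)^d$ the left-hand side of \eqref{666eq} equals $(-1)^d d!\,x_3^d$, which does not depend on $x_1$ at all. The identical computation with $x_2$ in place of $x_1$ yields the same expression on the right, proving \eqref{666eq}.

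For \eqref{667eq} the plan is to expand each summand using the binomial theorem in the first slot, namely
$$(\zeta_d^i x_1+x_2\pm x_3)^d=\sum_{k=0}^d\binom{d}{k}\zeta_d^{ik}x_1^k(x_2\pm x_3)^{d-k},$$
to multiply by the prefactor $\zeta_d^i$, and to swap the two summations. The only contributions to the outer sum come from those $k$ for which $\sum_{i=0}^{d-1}\zeta_d^{i(k+1)}\ne 0$; this inner character sum equals $d$ when $d\mid(k+1)$ and vanishes otherwise, and since $0\le k\le d$ the unique surviving index is $k=d-1$. Each of the two sums over $i$ therefore collapses to $\binom{d}{d-1}\cdot d\cdot x_1^{d-1}(x_2\pm x_3)=d^2 x_1^{d-1}(x_2\pm x_3)$, and adding the $+x_3$ and $-x_3$ versions gives $2d^2 x_1^{d-1}x_2$.

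There is no real obstacle: once the finite-difference viewpoint in \eqref{666eq} and the character-sum viewpoint in \eqref{667eq} are adopted, the verifications are mechanical. The only point where I would be slightly careful is keeping track of the extra factor $\zeta_d^i$ in \eqref{667eq}, since it shifts the surviving binomial index from the obvious $k=0$ to $k=d-1$, and this is precisely what forces the answer to be pure in $x_1^{d-1}x_2$ rather than involving other monomials of total degree $d$.
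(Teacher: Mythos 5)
Your proposal is correct. For \eqref{667eq} you take essentially the same route as the paper: a binomial expansion in the $x_1$-slot followed by the roots-of-unity filter $\sum_{i=0}^{d-1}\zeta_d^{i(k+1)}=d$ iff $d\mid k+1$, which (for $d\ge 2$, the only case the paper uses) isolates $k=d-1$; the paper merely packages this as the auxiliary identity $\sum_{i=0}^{d-1}\zeta_d^i(\zeta_d^i x_1+x_2)^d=d^2x_1^{d-1}x_2$ and then substitutes $x_2\mapsto x_2\pm x_3$, whereas you carry the $\pm x_3$ through the expansion directly --- the computations are identical. For \eqref{666eq} your argument is genuinely different and, I would say, cleaner: you invoke the standard fact that the $d$-th forward difference of a degree-$d$ polynomial in $t$ equals $d!$ times its leading coefficient, applied to $f(t)=(x_1+tx_3)^d$, which evaluates both sides to the common explicit value $(-1)^d\,d!\,x_3^d$. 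The paper instead runs an induction on $d$: it differentiates the sum with respect to $x_1$, uses Pascal's rule and the inductive hypothesis to show the derivative vanishes, concludes the left-hand side lies in $\K[x_3]$, and finishes by the $x_1\leftrightarrow x_2$ symmetry --- without ever naming the common value. Your version buys an explicit closed form and offloads the induction onto a textbook lemma; the paper's version is self-contained but longer. Both are sound, and both share the same (harmless, since the application has $d\ge 2$) implicit restriction in \eqref{667eq} that $d\nmid d+1$.
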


\begin{proof}
We first prove \eqref{666eq}. Assume that \eqref{666eq} holds when we replace $d$ by $d-1$.
By substituting $x_2 = x_1 + x_3$ on both sides, we obtain
\begin{align*}
\sum_{i=0}^{d-1} (-1)^i \binom{d-1}{i} (x_1 + i x_3)^{d-1} 
&= \sum_{i=0}^{d-1} (-1)^i \binom{d-1}{i} (x_1 + (i+1) x_3)^{d-1} \\
&= -\sum_{i=1}^{d} (-1)^i \binom{d-1}{i-1} (x_1 + i x_3)^{d-1}
\end{align*}
By $\binom{d}{i} = \binom{d-1}{i-1} + \binom{d-1}{i}$, both sides combine to
$$
0 = \sum_{i=0}^{d} (-1)^i \binom{d}{i} (x_1 + i x_3)^{d-1} = \frac1d \partial_1 
    \sum_{i=0}^d (-1)^i \binom{d}{i} (x_1 + i x_3)^d
$$
Hence the left hand side of \eqref{666eq} is contained in $\K[x_3]$. By a symmetry argument, 
\eqref{666eq} follows by induction on $d$, because the case $d=0$ is trivial.

Assume that $d \ge 2$ and that $\zeta_d \in K$ is a primitive $d$-th root of unity.
By substituting $x_2 = x_2 \pm x_3$ in
\begin{equation} \label{667heq}
\sum_{i=0}^{d-1} \zeta_d^i (\zeta_d^i x_1 + x_2)^d = d^2 x_1^{d-1} x_2
\end{equation}
we get \eqref{667eq}. So in order to prove \eqref{667eq}, it suffices to show \eqref{667heq}. 
This can be done as follows.
\begin{align*}
\sum_{i=0}^{d-1} \zeta_d^i (\zeta_d^i x_1 + x_2)^d 
&= \sum_{i=0}^{d-1} \zeta_d^i \sum_{j=0}^{d} 
      \binom{d}{j} (\zeta_d^i x_1)^j x_2^{d-j} \\
&= \sum_{j=0}^{d} \binom{d}{j} x_1^j x_2^{d-j} 
      \sum_{i=0}^{d-1} \zeta_d^{i(j+1)} \\
&= \binom{d}{d-1} x_1^{d-1} x_2^1 \sum_{i=0}^{d-1} 1 \nonumber \\
&= d^2 x_1^{d-1} x_2 \qedhere
\end{align*}
\end{proof}

\noindent
Notice that \eqref{667eq} is not true for $d=1$. The proof uses $\zeta_d \in K$ to obtain that
$\zeta_d^1, \zeta_d^2, \ldots, \zeta_d^{d-1}$ are principal as $d$-th roots of unity,
and additionally $d \ge 2$ to obtain that $\zeta_d^{d+1}$ is principal as a $d$-th root of unity.

\begin{lemma} \label{powercomb}
To write $x_1^{d-1} x_2$ as a linear combination of $x_1^d$ and other $d$-th powers of linear forms, 
at least $d$ such powers are necessary besides $x_1^d$.
\end{lemma}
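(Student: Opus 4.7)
My plan is to reduce the problem to two variables and then extract a Vandermonde system whose rank forces the claimed lower bound.

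First I would set $x_3 = x_4 = \cdots = x_n = 0$: the left-hand side $x_1^{d-1}x_2$ is unchanged, and an identity of the form $x_1^{d-1}x_2 = \lambda_0 x_1^d + \sum_{i=1}^m \lambda_i (a_i\tp x)^d$ restricts to an identity in $\K[x_1,x_2]$ with at most $m$ extra $d$-th powers. So I may assume each extra linear form is $L_i = \alpha_i x_1 + \beta_i x_2$. If $\beta_i = 0$ then $L_i^d$ is a scalar multiple of $x_1^d$ and can be absorbed into $\lambda_0 x_1^d$, so without loss of generality $\beta_i = 1$. If two of the $L_i$ were proportional the corresponding terms would combine into a single $L^d$, so without loss of generality the $\alpha_i$ are pairwise distinct. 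Thus the relation to analyze is
$$
x_1^{d-1} x_2 = \lambda_0 x_1^d + \sum_{i=1}^m \mu_i (\alpha_i x_1 + x_2)^d
$$
with distinct scalars $\alpha_1,\ldots,\alpha_m$ and nonzero $\mu_i$.

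Next I would expand $(\alpha_i x_1 + x_2)^d = \sum_{j=0}^d \binom{d}{j} \alpha_i^j x_1^j x_2^{d-j}$ and compare coefficients of $x_1^j x_2^{d-j}$ on both sides. This yields
\begin{align*}
\sum_{i=1}^m \mu_i \alpha_i^{\,j} &= 0 \qquad (0 \le j \le d-2),\\
d \sum_{i=1}^m \mu_i \alpha_i^{\,d-1} &= 1,
\end{align*}
together with one further equation for the $x_1^d$-coefficient which only serves to determine $\lambda_0$ and which I will ignore.

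Finally, suppose for contradiction that $m \le d-1$. Then the first $m$ homogeneous equations (those for $j = 0,1,\ldots,m-1$) form the square Vandermonde system $V\mu = 0$ where $V = (\alpha_i^{\,j})_{0 \le j \le m-1,\,1 \le i \le m}$. Since the $\alpha_i$ are distinct, $V$ is invertible, forcing $\mu_1 = \cdots = \mu_m = 0$; but then $\sum_i \mu_i \alpha_i^{d-1} = 0$, contradicting the final equation $d\sum_i \mu_i \alpha_i^{d-1} = 1$ (using $\operatorname{char}\K = 0$). Hence $m \ge d$.

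The proof is essentially a single Vandermonde rank computation; no step looks hard. The only real thing to be careful about is the initial reduction, namely justifying that terms with $\beta_i = 0$ or with proportional $L_i$'s can be consolidated without decreasing the count beyond $x_1^d$ itself, which is immediate from the absorption arguments above.
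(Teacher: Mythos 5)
Your proof is correct, but it takes a genuinely different route from the paper's. You restrict to the plane $x_3=\cdots=x_n=0$, normalize the surviving forms to $\alpha_i x_1+x_2$ with distinct $\alpha_i$ (absorbing anything proportional to $x_1$ into the $x_1^d$ term), and compare coefficients of $x_1^jx_2^{d-j}$: for $0\le j\le d-2$ this gives the homogeneous conditions $\sum_i\mu_i\alpha_i^{\,j}=0$, and if $m\le d-1$ the first $m$ of these form an invertible Vandermonde system forcing $\mu=0$, which contradicts $d\sum_i\mu_i\alpha_i^{\,d-1}=1$. The paper instead applies $\partial_2$ to the alleged identity to drop to an identity among $(d-1)$-st powers and then invokes Lemma \ref{alem}, whose proof is an induction on $d$ via directional derivatives chosen using genericity and pairwise-independence hypotheses on the linear forms. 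Your argument is more elementary and self-contained --- it needs only the nonvanishing of binomial coefficients in characteristic zero and the classical Vandermonde determinant, and avoids the bookkeeping of choosing auxiliary vectors $a_1,a_2,a_{d'+3},\ldots,a_{2d'+2}$ to meet the hypotheses of Lemma \ref{alem}. What the paper's route buys is economy within the article: Lemma \ref{alem} is needed independently for the counterexample analysis in the last section, so reusing it there makes Lemma \ref{powercomb} a two-line corollary. Both proofs are complete; the one point worth stating explicitly in yours is that the restriction and the absorption steps can only decrease the number of extra powers, so a lower bound for the reduced two-variable identity is indeed a lower bound for the original one --- you do note this, so there is no gap.
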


\begin{proof}
The case $d=1$ is easy, so let $d \ge 2$ and suppose that $x_1^{d-1} x_2$ can be written as a linear 
combination of $x_1^d, (a_3\tp x)^d, (a_4\tp x)^d, \ldots, (a_{d+1}\tp x)^d$. Assume without loss of 
generality that $n \ge 2d+2$ and that the vectors $e_1, a_3, a_4, \ldots, a_{d+1}$ are pairwise 
linearly independent. By applying $\partial_2$ on this linear combination, we obtain that
$$
x_1^{d'} = \lambda_3 (a_3\tp x)^{d'} + \lambda_4 (a_4\tp x)^{d'} + \cdots + \lambda_{d'+2} (a_{d'+2}\tp x)^{d'}
$$
where $d' = d-1$. Take $a_1 = e_1$ and take $a_2$ linearly independent of $a_1, a_3, \ldots, a_{d'+2}$. 
Next, take $a_i$ linearly independent of $a_1,a_2,\ldots, a_{i-1}$ for all $i$ with $d'+ 3 \le i \le 2d'+2$. 
Then lemma \ref{alem} with $d$ replaced by $d'$ gives a contradiction.
\end{proof}

\section{Counterexamples}

We start with giving counterexamples $x + H$ to $\eqref{jc} \Rightarrow \eqref{adddruz}$ and
$\eqref{addkel} \Rightarrow \eqref{yantri}$, such that $H$ is homogeneous of degree $d \ge 3$
and $d \ge 2$ respectively. With known techniques, these counterexamples can be improved to 
counterexamples of the form $x + (Ax)^{*d}$.

\begin{theorem} \label{n4n5}
If $n = 4$ and $d \ge 3$, then
\begin{equation} \label{n4}
H = x_1^{d-3}\big(0, 0,  x_2 (x_1 x_3 - x_2 x_4), x_1 (x_1 x_3 - x_2 x_4)\big)
\end{equation}
is a homogeneous counterexample of degree $d$ to $\eqref{jc} \Rightarrow \eqref{adddruz}$.

If $n = 5$ and $d \ge 2$, then
\begin{equation} \label{n5}
H = \big(0,0,x_2^{d-1}x_4,x_1^{d-1}x_3-x_2^{d-1}x_5,x_1^{d-1}x_4\big)
\end{equation}
is a homogeneous counterexample of degree $d$ to $\eqref{addkel} \Rightarrow \eqref{yantri}$.

Furthermore, there exist a power linear counterexample to $\eqref{jc} \Rightarrow 
\eqref{adddruz}$ for each $d \ge 3$, and a power linear counterexample to 
$\eqref{addkel} \Rightarrow \eqref{yantri}$ for each $d \ge 2$.
\end{theorem}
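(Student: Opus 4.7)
My plan is to establish the four claims separately: invertibility and the failure of \eqref{adddruz} for $H$ in \eqref{n4}; validity of \eqref{addkel} and failure of \eqref{yantri} for $H$ in \eqref{n5}; and then deduce the power linear counterexamples by a lifting procedure.

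For \eqref{n4}, I would first exploit the identity $x_1 H_3 = x_2 H_4$ (both sides equal $x_1^{d-2} x_2 (x_1 x_3 - x_2 x_4)$) to rewrite the invariant $u := x_1 x_3 - x_2 x_4$ as $F_1 F_3 - F_2 F_4$. With $F_1 = x_1$ and $F_2 = x_2$, this yields a polynomial inverse $x_3 = F_3 - F_1^{d-3} F_2 u$, $x_4 = F_4 - F_1^{d-2} u$, establishing \eqref{jc}. To refute \eqref{adddruz}, pick $t \in \Kbar$ with $t^{d-1} = (d-1)/2$ (available in characteristic zero) and set $v_1 = (t, \I t, 0, 0)$, $v_2 = (t, -\I t, 0, 0)$, and $v_3 = \cdots = v_{d-1} = 0$. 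Because the top two rows of $\jac H$ vanish identically and its first two columns vanish whenever $x_3 = x_4 = 0$, the matrix $\sum_{i=1}^{d-1} \jac F|_{v_i}$ is block diagonal, with top block $(d-1) I_2$ and bottom-right block $(d-1) I_2 + 2 t^{d-1} \bigl(\begin{smallmatrix} 0 & 1 \\ 1 & 0 \end{smallmatrix}\bigr)$; the latter has determinant $(d-1)^2 - 4 t^{2(d-1)} = 0$.

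For \eqref{n5}, the Jacobian $\jac H$ has vanishing top two rows, so writing $\jac H = \bigl(\begin{smallmatrix} 0 & 0 \\ C & B \end{smallmatrix}\bigr)$ with $B$ the lower-right $3 \times 3$ block, one observes that $B|_v = x_2^{d-1} E_1 + x_1^{d-1} E_2$ for two fixed matrices $E_1, E_2$ obtained from $\jac H$ at $(0,1,0,0,0)$ and $(1,0,0,0,0)$ respectively. To verify \eqref{addkel}, I would compute $B_\Sigma := \sum_{i=1}^5 B|_{v_i}$ and check $B_\Sigma^3 = 0$ by a direct $3 \times 3$ calculation (using $E_1^3 = E_2^3 = 0$ together with an explicit cancellation in the mixed terms); combined with the block identity $(\jac H)^k = \bigl(\begin{smallmatrix} 0 & 0 \\ B^{k-1} C & B^k \end{smallmatrix}\bigr)$, this makes $\sum_{i=1}^5 \jac H|_{v_i}$ nilpotent and forces $\det \sum \jac F|_{v_i} = 5^5$. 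To refute \eqref{yantri}, I invoke Theorem \ref{tri}(i) and \cite{MR1412743} to reduce to showing that $\jac H$ is not strongly nilpotent. Taking $v_{2i-1} = (0,1,0,0,0)$ and $v_{2i} = (1,0,0,0,0)$, the products $B|_{v_1} \cdots B|_{v_{2k}}$ reduce to $(E_1 E_2)^k = \diag(1, (-1)^k, 0)$, which is nonzero for every $k$, contradicting strong nilpotence.

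For the power linear counterexamples, I would lift the two homogeneous examples via a standard Dru{\.z}kowski-type construction that embeds a homogeneous Keller map into a power linear one $y + (A y)^{*d}$ in higher dimension, following \cite[Th.\@ 2]{MR2108467} and the analogous reductions exploited in the proof of Theorem \ref{posres}. I expect the main obstacle of the whole proof to sit here: one must verify that the lifting transfers both the satisfied and the violated condition simultaneously, that is, preserves \eqref{jc} together with the failure of \eqref{adddruz} for the first example, and preserves \eqref{addkel} together with the failure of \eqref{yantri} for the second. The positive transfers follow directly from the construction, but the negative transfers require tracking the refuting vectors (respectively, the obstruction to linear triangularizability) through the lifting, which is the main technical step.
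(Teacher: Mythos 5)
Your treatment of the two explicit homogeneous examples is correct and essentially coincides with the paper's own argument. For \eqref{n4} you prove invertibility by exhibiting the inverse $x-H$ through the invariance of $x_1$, $x_2$ and $x_1x_3-x_2x_4$ (the paper phrases this as $x+H$ being a quasi-translation), and your witnesses $v_1=(t,\I t,0,0)$, $v_2=(t,-\I t,0,0)$, $v_3=\cdots=v_{d-1}=0$ with $t^{d-1}=(d-1)/2$ do kill the determinant: the sum is $(d-1)I_4$ plus a matrix supported in the trailing $2\times2$ block, equal there to $2t^{d-1}\bigl(\begin{smallmatrix}0&1\\1&0\end{smallmatrix}\bigr)$, so the trailing $2\times2$ determinant is $(d-1)^2-4t^{2(d-1)}=0$. (The paper instead takes $d-2$ copies of $(1,0,0,0)$ and one copy of $(1,c,0,0)$ with $c=\frac{d-1}{\sqrt{d-2}}\I$; both choices work.) For \eqref{n5}, your observation that every sum of evaluated Jacobians has lower-right block $\beta E_1+\alpha E_2$ with vanishing cube, and that $(E_1E_2)^k=\diag(1,(-1)^k,0)\ne 0$ refutes strong nilpotence and hence, via \cite{MR1412743} and theorem \ref{tri}~i), condition \eqref{yantri}, is exactly the paper's computation (the paper uses the product $(\jac H)|_{x_1=0}\cdot(\jac H)|_{x_2=0}$, which is the same $E_1E_2$ calculation up to the scalars $x_1^{d-1}x_2^{d-1}$).

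The genuine gap is in the third claim. You correctly identify that the power linear examples should come from a Gorni--Zampieri/Cheng-type lifting, but you then declare that the negative transfers ``require tracking the refuting vectors \ldots\ through the lifting, which is the main technical step'' and stop --- so the only part of the theorem that is not a finite computation is left unproved. Moreover, the assertion that ``the positive transfers follow directly from the construction'' is too quick: condition \eqref{addkel} for the lifted map $X+(AX)^{*d}$ quantifies over $N$ arbitrary points of $\Kbar^N$ with $N>n$, and \eqref{jc} is invertibility of the lifted map; neither is immediate from the pairing, and the paper treats both as requiring proof. Concretely, the paper takes the GZ-pair $X+(AX)^{*d}$ of $x+H$ from \cite[Th.\@ 1.3]{MR1621913}, observes that $\bar T=(C\mid M)$ puts the two maps in the conjugation relation of \cite[Th.\@ 2]{MR2108467}, and then invokes GZ-invariance of each condition: invertibility by \cite[Th.\@ 1.3 (9)]{MR1621913}, condition \eqref{yantri} by \cite[Th.\@ 3 (2)]{MR2419134}, and \eqref{adddruz} and \eqref{addkel} by the techniques of \cite[Th.\@ 2.4]{MR1621913}. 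To complete your proof you must supply these invariance statements in the appropriate directions (one condition must ascend to the lift, the failed one must descend from it), which is precisely the content you deferred.
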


\begin{proof}
Assume first that $n = 4$ and $H$ is as in \eqref{n4}. Since the components of $H$ are composed
of the invariants $x_1, x_2, x_1 x_3 - x_2 x_4$ of $x + H$, we see that $x + H$ is a quasi-translation,
i.e.\@ $x - H$ is the inverse of $x + H$. One can compute that the trailing principal minor matrix of
size $2$ of $(d-1) I_4 + (d-2)\bcdot (\jac H)|_{x=(1,0,0,0)} + (\jac H)|_{x=(1,c,0,0)}$ equals
$$ 
\left( \begin{array}{cc} d - 1 + c & -c^2 \\ d - 1 & d - 1 - c \end{array} \right)
$$
and that its determinant equals $c^2 (d-2) + (d-1)^2$. So if we take
$c = \frac{d-1}{\sqrt{d-2}} \I$, then 
$$
\det \Big( (d-1) I_4 + (d-2) (\jac H)|_{x=(1,0,0,0)} + (\jac H)|_{x=(1,c,0,0)} \Big) = 0
$$
which contradicts $\eqref{jc} \Rightarrow \eqref{adddruz}$.

Assume next that $n = 5$ and $H$ is as in \eqref{n5}. Then one can compute that
$$
\jac H = \left( \begin{array} {ccccc} 
0 & 0 & 0 & 0 & 0 \\
0 & 0 & 0 & 0 & 0 \\
* & * & 0 & b & 0 \\
* & * & a & 0 & -b \\
* & * & 0 & a & 0
\end{array} \right)
$$
for certain polynomials $a, b$.
The form on the right hand side does not change by substitution and adding copies of $\jac H$ with different 
substitutions, so $\sum_{i=1}^n (\jac H)|_{x=v_i}$ is nilpotent for all $v_i \in \K^n$. 
This gives \eqref{addkel}.

On the other hand, $(\jac H)|_{x_1=0} \cdot (\jac H)|_{x_2=0}$ is a lower triangular matrix
with diagonal $(0,0,x_1^{d-1} x_2^{d-1},-x_1^{d-1} x_2^{d-1}, 0)$, so
$(\jac H)|_{x_1=0} \cdot (\jac H)|_{x_2=0}$ is not nilpotent. By \cite{MR1412743}, 
we see that $H$ is a counterexample to $\eqref{addkel} \Rightarrow \eqref{yantri}$.

To obtain power linear counterexamples, we can use the concept of GZ-pairing in \cite{MR1621913}.
For that purpose, let $H$ be any of the above two maps. By \cite[Th.\@ 1.3]{MR1621913}, there exists
an $N > n$ and an $A \in \Mat_N(\K)$, such that $x + H$ and $X + (AX)^{*d}$ are GZ-paired through
matrices $B \in \Mat_{n,N}(\K)$ and $C \in \Mat_{N,n}(\K)$, where $X = (x_1, x_2, \ldots, x_N)$. 
Take $M \in \Mat_{N,N-n}(\K)$ such that the columns of $M$ form a basis
of $\ker B$ and define $\bar{T} = (C \mid M)$. 

Then one can show that $\bar{T}$ is as in the proof of \cite[Th.\@ 2]{MR2108467}, with 
$F = X + (AX)^{*d}$ and $F_1 = (x + H, \ldots)$. 
Now one can use similar techniques as in the proof of theorem \ref{posres}
to obtain that $(AX)^{*d}$ is a counterexample as well as $H$, or use the following invariance results
for GZ-pairing. The GZ-invariance of \eqref{jc} follows from \cite[Th.\@ 1.3 (9)]{MR1621913} and
that of \eqref{yantri} from \cite[Th.\@ 3 (2)]{MR2419134}. The GZ-invariance of \eqref{adddruz}
and \eqref{addkel} can be proved with techniques in the proof \cite[Th.\@ 2.4]{MR1621913}.
\end{proof}

\begin{example}
Let $x =(x_1,x_2,\ldots,x_5)$ and $X =(x_1,x_2,\ldots,x_{13})$. Take $H$ as in \eqref{n5} and 
\begin{multline*}
G = \big(0,0,(x_4-x_1)^3,(x_4+x_1)^3,x_4^3,(x_4-x_2)^3,(x_4+x_2)^3, \\
(x_3-x_1)^3,(x_3+x_1)^3,x_3^3,(x_5-x_2)^3,(x_5+x_2)^3,x_5^3\big)
\end{multline*}
Then $\ker \jac_x G$ is trivial and $6H = BG$, where 
$$
B = \left( \begin{array}{ccccccccccccc}
\,1 & \,0 & \,0 & \,0 & \,0 & \,0 & \,0 & \,0 & \,0 & \,0 & \,0 & \,0 & \,0 \\
\,0 & \,1 & \,0 & \,0 & \,0 & \,0 & \,0 & \,0 & \,0 & \,0 & \,0 & \,0 & \,0 \\
\,0 & \,0 & \,0 & \,0 & \!\!\!\!-2\!\! & \,1 & \,1 & \,0 & \,0 & \,0 & \,0 & \,0 & \,0 \\
\,0 & \,0 & \,0 & \,0 & \,0 & \,0 & \,0 & \,1 & \,1 & \!\!\!\!-2\!\! 
& \!\!\!\!-1\!\! & \!\!\!\!-1\!\! & \,2 \\
\,0 & \,0 & \,1 & \,1 & \!\!\!\!-2\!\! & \,0 & \,0 & \,0 & \,0 & \,0 & \,0 & \,0 & \,0
\end{array} \right)
$$
has full rank. Hence there exists a matrix $C$ such that $BC = I_5$.
Consequently, $x + H$ and $X + \frac16G(BX)$ are GZ-paired through $B$ and $C$.
\end{example}

\noindent
In the next theorem, we give threedimensional counterexamples $F = x + H$ to $\eqref{yantri} \Rightarrow 
\eqref{druztri}$ and $\eqref{druztri} \Rightarrow \eqref{dittotri}$, such that $H$ is homogeneous of 
degree $d \ge 3$. The techniques in the proof of the previous theorem to get counterexamples of the 
form $F = x + (Ax)^{*d}$ do not work, so we improve our counterexamples to that form by hand.

\newpage

\begin{theorem}
Assume that $d \ge 2$, and either
\begin{equation} \label{666}
\left( \begin{array}{c} 
          H_1 \\ H_2 \\ H_3 \\ 
          H_4 \\ \vdots \\ H_{d+2} \\ 
          H_{d+3} \\ H_{d+4} \\ \vdots \\ H_{2d+2} 
\end{array} \right) := \left(\begin{array}{c}
          0 \\ \nu x_1^d \\ x_1^d - x_2^d \\
          (x_1 + 2 x_3)^d \\ \vdots \\ (x_1 + d x_3)^d \\
          (x_2 + x_3)^d \\ 
          (x_2 + 2 x_3)^d \\ \vdots \\ (x_2 + d x_3)^d \\
\end{array} \right)
\end{equation}
or
\begin{equation} \label{667}
\left( \begin{array}{c} 
          H_1 \\ H_2 \\ H_3 \\ 
          H_4 \\ \vdots \\ H_{d+2} \\ 
          H_{d+3} \\ H_{d+4} \\ \vdots \\ H_{2d+2} 
\end{array} \right) := \left(\begin{array}{c}
          0 \\ \nu x_1^d \\ x_1^{d-1} x_2\\
          (\zeta_d x_1 + x_2 + x_3)^d \\ \vdots \\ (\zeta_d^{d-1} x_1 + x_2 + x_3)^d \\
          (x_1 + x_2 - x_3)^d \\ 
          (\zeta_d x_1 + x_2 - x_3)^d \\ \vdots \\ (\zeta_d^{d-1} x_1 + x_2 - x_3)^d
\end{array} \right)
\end{equation}
for some $\nu \in \K$, where $\zeta_d$ is a primitive root of unity of $\K$ in the case of 
\eqref{667}. Then $2d + 2 \ge 6$,
\begin{equation} \label{666pl}
d(x_1 + x_3)^d = H_3 + \sum_{i=2}^d (-1)^i \binom{d}{i} H_{i+2}
                 - \sum_{i=1}^d (-1)^i \binom{d}{i} H_{i+d+2}
\end{equation}
in the case of \eqref{666} and
\begin{equation} \label{667pl}
(x_1 + x_2 + x_3)^d = 2d^2 H_3  - \zeta_d H_4 - \zeta_d^2 H_5 - \cdots - \zeta_d^{2d-1} H_{2d+2}
\end{equation}
in the case of \eqref{667}, and there exists a $T \in \GL_{2d+2}(\K)$ such that
$T(H(T^{-1}x))$ is power linear if $n = 2d+2$. 

If $3 \le n \le 2d+2$, then $H = (H_1, H_2, \ldots, H_n)$ is of the form \eqref{yantri} 
and we have the following.
\begin{enumerate}[\upshape (i)]

\item If $H$ is of the form \eqref{druztri}, then $c_1$ and $c_2$ are linearly independent linear combinations
of $e_1$ and $e_2$. 

\item $H$ is of the form \eqref{druztri}, if and only if either $H$ is as in \eqref{666} or
$H$ is as in \eqref{667} with $H_2  = 0 = d-2$.

\item $H$ is of the form \eqref{dittotri}, if and only if $H$ is as in \eqref{666} with $H_2 \ne 0$.

\end{enumerate}
\end{theorem}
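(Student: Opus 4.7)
The plan is to tackle the claims in the order stated. The bound $2d+2 \geq 6$ is immediate from $d \geq 2$. The identities \eqref{666pl} and \eqref{667pl} follow by rearranging the equations of Lemma \ref{eqs}. For \eqref{666pl}, move the $i = 0$ and $i = 1$ terms of \eqref{666eq} to one side, noting that the $i=1$ contribution is $-d(x_1+x_3)^d$ and that $x_1^d - x_2^d = H_3$, while the remaining terms match $(-1)^i\binom{d}{i}H_{i+2}$ and $-(-1)^i\binom{d}{i}H_{i+d+2}$. For \eqref{667pl}, pull the $i = 0$ summand $(x_1+x_2+x_3)^d$ out of the first sum in \eqref{667eq}, and use $\zeta_d^d = 1$ to regroup the remainder as $-\sum_{k=4}^{2d+2}\zeta_d^{k-3}H_k$.

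For the power-linear claim when $n = 2d+2$: take $T \in \GL_{2d+2}(\K)$ whose $i$th row is $e_i\tp$ for $i \neq 3$ and whose third row is $1/d$ (resp.\ $1/(2d^2)$) times the coefficient vector of $H_3, H_4, \ldots, H_{2d+2}$ appearing in \eqref{666pl} (resp.\ \eqref{667pl}). Since $T$ agrees with the identity outside row~$3$ and the entry $T_{33}$ is nonzero, $T$ is invertible. The third component of $TH$ is $(y_1+y_3)^d$ (resp.\ $(y_1+y_2+y_3)^d$) by the identity, while the other components coincide with the $H_i$'s---each a power of a linear form or zero. Substituting $y = T^{-1}x$ preserves the property of being a power of a linear form, so $TH(T^{-1}x)$ is power linear.

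For the form \eqref{yantri} when $3 \leq n \leq 2d+2$: split $H_3 e_3$ into rank-one summands---two for \eqref{666} ($x_1^d e_3$ and $-x_2^d e_3$) and $d$ for \eqref{667} via $\sum_{i=0}^{d-1}\zeta_d^i(\zeta_d^i x_1 + x_2)^d = d^2 x_1^{d-1} x_2$ from the proof of Lemma \ref{eqs}. The resulting decomposition $H = \sum_j (c_j\tp x)^{d_j} b_j$ has $c_j \in \text{span}(e_1, e_2)$ for the $H_2, H_3$ terms (with $b_j \in \text{span}(e_2, e_3)$), and $b_j = e_k$ for some $k \geq 4$ on the other terms. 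Order so that the $H_2, H_3$ terms come first; then $c_j\tp b_i = 0$ for $i \geq j$ is automatic, since each later $b_i$ is $e_k$ with $k \geq 4$ while each earlier $c_j$ lies in $\text{span}(e_1, e_2, e_3)$. Truncating to the first $n$ components preserves the property.

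The sub-claims i)--iii) are the most delicate, with the ``only if'' direction of ii) for \eqref{667} as the main obstacle. For i), normalize any \eqref{druztri}-representation so that $c_i \in \text{span}(e_1,e_2,e_3)$ and $b_i \in e_1^\perp$ (possible because $H \in \K[x_1, x_2, x_3]^n$ and $H_1 = 0$); then the span $V$ of $\{b_1,\ldots,b_{n-1}\}$ contains $e_3, e_4, \ldots, e_n$ (from the distinct nonzero components $H_3, H_4, \ldots, H_n$), so $c_1 \perp V$ forces $c_1 \in \text{span}(e_1,e_2)$, and analogously $c_2 \in \text{span}(e_1,e_2)$; independence follows from a leading-$x_1,x_2$-term argument. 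For the ``if'' direction of ii) in case \eqref{666}, merge $\nu x_1^d e_2$ and $x_1^d e_3$ into a single term $x_1^d(\nu e_2 + e_3)$, keeping $-x_2^d e_3$ separate, to obtain exactly $n-1 = 2d+1$ terms satisfying the orthogonality; for \eqref{667} with $\nu = 0 = d-2$, use $x_1 x_2 = \tfrac{1}{4}(x_1+x_2)^2 - \tfrac{1}{4}(x_1-x_2)^2$ to represent $H_3$ via two powers in $\text{span}(e_1, e_2)$, giving $5 = n-1$ valid terms. For iii), in the \eqref{666}-representation the vectors $b_1 = \nu e_2 + e_3$, $b_2 = -e_3$, $b_k = e_{k+1}$ for $k \geq 3$ are independent precisely when $\nu \neq 0$; in the \eqref{667} case where \eqref{druztri} holds ($\nu = 0, d = 2$), the two $b$'s realizing $H_3$ both lie in $\text{span}(e_3)$ and hence are dependent. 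The hardest step is excluding \eqref{druztri}-representations for \eqref{667} when $\nu \neq 0$ or $d \geq 3$: claim i) forces $c_1, c_2 \in \text{span}(e_1, e_2)$, so only $2d-1$ terms remain to cover $H_4, \ldots, H_{2d+2}$ (themselves $2d-1$ independent powers involving $x_3$) along with the $\text{span}(e_1, e_2)$-orthogonal part of $H_3$, and by Lemma \ref{powercomb} this budget is insufficient.
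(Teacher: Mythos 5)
Your handling of the preliminary claims is sound and essentially matches the paper: \eqref{666pl} and \eqref{667pl} are exactly the rearrangements of lemma \ref{eqs} you describe, the row-replacement matrix $T$ is the intended conjugation, and your explicit rank-one decomposition for \eqref{yantri} is a valid, slightly more hands-on substitute for the paper's one-line appeal to i) of theorem \ref{tri}. The "if" directions of ii) and iii) also agree with the paper's explicit choices of $c_j$ and $b_i$. The genuine problems are in the "only if" directions of i)--iii).

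In i), the opening move --- normalizing an arbitrary \eqref{druztri}-representation so that $c_i \in \operatorname{span}(e_1,e_2,e_3)$ and $b_i \in e_1^\perp$ --- is not a legitimate reduction. A representation $H=\sum_{i=1}^{n-1}(c_i\tp x)^{d_i}b_i$ with the orthogonality constraints and exactly $n-1$ terms is far from unique: individual terms may involve $x_4,\ldots,x_n$ and cancel (the whole point of \eqref{666pl} and \eqref{667pl} is that the relevant powers of linear forms satisfy nontrivial linear relations), and the first components of the $b_i$ may cancel without each vanishing. Nothing you say produces a normalized representation with the same term count and orthogonality pattern. Moreover, your step ``$V$ contains $e_3,\ldots,e_n$'' silently assumes the linear independence of $H_3,\ldots,H_n$ modulo multiples of $x_1^d$, which is exactly the content of the paper's Claim and is proved there by building a relation $\sum_i\lambda_i(a_i\tp x)^d=0$ with $\lambda_1=0$ and invoking lemma \ref{alem}; and the independence of $c_1,c_2$ is not a ``leading-term argument'' --- the paper needs a real dimension count (if $c_1,c_2$ were dependent, the span $S^{*}$ of the $(c_i\tp x)^d$ would be generated by $n-2$ powers, would have to equal $S$ while avoiding $(a_2\tp x)^d$, forcing $c_1=c_2=0$ and $\dim S^{*}<n-2$, a contradiction), for which your sketch offers no substitute.

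In iii), the negative direction is logically broken: \eqref{dittotri} asserts the \emph{existence} of a representation with independent $b_i$'s, so observing that the particular representation constructed in ii) has dependent $b$'s (e.g.\ $b_1=e_3=-b_2$ when $\nu=0$) proves nothing. The paper argues for an \emph{arbitrary} \eqref{dittotri}-representation: when $H_2=0$, part i) gives $c_2\in\operatorname{span}(e_1,e_2)$, hence $c_2\tp H=0$, hence $c_2\tp b_1=0$ by \eqref{c2H}, so all $n-1$ supposedly independent $b_i$ lie in the $(n-2)$-dimensional space orthogonal to the independent vectors $c_1,c_2$ --- a contradiction. Your counting argument for the ``only if'' of ii) is close to the paper's, but it too needs lemma \ref{alem} (applied after differentiating by $x_3$, with shifted parameters $d'=d-1$) to certify that $H_4,\ldots,H_n$ cannot be traded against powers of linear forms in $x_1,x_2$; as written, the claim that the ``budget is insufficient'' is asserted rather than proved.
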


\begin{proof}
Since $\jac H$ is lower triangular with zeroes on the diagonal,
it follows from (i) of theorem \ref{tri} that $H$ is of the form \eqref{yantri}. By \eqref{666eq} and
\eqref{667eq} in lemma \ref{eqs}, we get \eqref{666pl} and
\eqref{667pl} respectively. So $H$ is a linear triangularization of a power linear map if 
$n = 2d+2$.

In the case of \eqref{666}, set
\begin{align*}
a_{2+i}\tp x &:= x_1 + i x_3  & a_{d+2+i}\tp x &:= x_2 + i x_3 
\intertext{for $i = 1,2,\ldots,d$. In the case of \eqref{667}, set}
a_{2+i}\tp x &:= \zeta_d^{i-1} x_1 + x_2 + x_3 & a_{d+2+i}\tp x &:= \zeta_d^{i-1} x_1 + x_2 - x_3 
\end{align*}
for $i = 1,2,\ldots,d$. Then $H_i = (a_i\tp x)^d$ for all $i \ge 4$ and the left hand side of
\eqref{666pl} or \eqref{667pl} respectively is a multiple of $(a_3\tp x)^d$. Hence the linear 
span $S$ of $H_3, H_4, \ldots, H_n$ is contained in that of $(a_3\tp x)^d, (a_4\tp x)^d, \ldots, 
(a_{2d+2}\tp x)^d$.
\begin{enumerate}[\upshape (i)]

\item
\emph{Claim.} If $\mu\tp H$ and $\mu_2 H_2$ are both linearly dependent over $K$ of the same power of a 
linear form in $x_1$ and $x_2$ for some $\mu \in \K^n$, then $\mu$ is a linear combination of 
$e_1$ and $e_2$.

To prove the claim, assume that $\mu\tp H$ and $\mu_2 H_2$ are as above. Then there exists a nontrivial linear 
combination $a_2$ of $e_1$ and $e_2$ such that both $\mu\tp H$ and $\mu_2 H_2$ are linearly dependent of 
$(a_2\tp x)^d$. On account of $H_1 = 0$, we have
$$
0 x_4^d + (\mu_2 H_2 - \mu\tp H) + \mu_3 H_3 + \mu_4 H_4 + \cdots + \mu_n H_n = 0 
$$
Take $a_1 = e_4$. By \eqref{666pl} and \eqref{667pl} respectively, there exist a $\lambda \in \K^{2d+2}$ 
with $\lambda_1 = 0$, such that 
$$
\lambda_1 (a_1 \tp x)^d + \lambda_2 (a_2 \tp x)^d + \lambda_3 (a_3 \tp x)^d 
+ \cdots + \lambda_{2d+2} (a_{2d+2} \tp x)^d = 0
$$
Furthermore, there exists an injective linear map which maps $(\mu_3, \mu_4, \allowbreak \ldots, \mu_n)$
to $(\lambda_3, \lambda_4, \ldots, \lambda_{2d+2})$. By lemma \ref{alem} and $\lambda_1 = 0$, we have 
$\lambda = 0$. Thus $\mu_3 = \mu_4 = \cdots = \mu_n = 0$. So $\mu$ is a linear combination of 
$e_1$ and $e_2$ and the claim has been proved. 

Suppose that $H$ is of the form \eqref{druztri}. Since $c_j\tp b_i = 0$ for all $i \ge j \ge 1$, we have
\begin{equation} \label{c1H}
c_1\tp H = c_1\tp \sum_{i=1}^{n-1} (c_i\tp x)^d b_i = \sum_{i=1}^{n-1} c_1\tp b_i (c_i\tp x)^d = 0
\end{equation}
thus $c_1$ is a linear combination of $e_1$ and $e_2$ on account of the above claim. 
Using \eqref{c1H} again, we see that $c_1$ is linearly dependent of $e_1$ if $H_2 \ne 0$. 
Hence $(c_1\tp x)^d$ and $H_2$ are linearly dependent of the same power of a linear form in $x_1$ and $x_2$. 
By using $c_j\tp b_i = 0$ for all $i \ge j \ge 1$ again, we obtain that
\begin{equation} \label{c2H}
c_2\tp H = c_2\tp \sum_{i=1}^{n-1} (c_i\tp x)^d b_i = \sum_{i=1}^{n-1} c_2\tp b_i (c_i\tp x)^d = 
c_2\tp b_1 (c_1\tp x)^d
\end{equation}
It follows from the above claim that $c_2$ is a linear combination of $e_1$ and $e_2$ as well.

Suppose that $c_1$ and $c_2$ are linearly dependent. Then there exist a nontrivial linear combination $a_2$ of 
$e_1$ and $e_2$ such that both $c_1$ and $c_2$ are linearly dependent of $a_2$. Using the claim with 
$\mu_1 = \mu_2 = 0$, and $\mu\tp H = 0$ and $\mu\tp H = (a_2\tp x)^d$ respectively, 
we obtain $\dim S = n-2$ and $(a_2\tp x)^d \notin S$. 

The space $S^{*}$ generated 
by $(c_1\tp x)^d, (c_2 \tp x)^d, \ldots, (c_{n-1}\tp x)^d$, which contains $S$, is generated by 
$n-2$ powers of linear forms, namely $(a_2\tp x)^d, (c_3\tp x)^d, \allowbreak (c_4\tp x)^d, \ldots, 
(c_{n-1}\tp x)^d$. Hence $S^{*} \supseteq S$ and $\dim S^{*} \le n-2 = \dim S$. It follows that 
$S = S^{*}$. Since $(a_2\tp x)^d \notin S$ and $(a_2\tp x)^d \in = S^{*}$, we have 
a contradiction, so $c_1$ and $c_2$ are indeed linearly independent.

\item
If $H$ is as in \eqref{666}, then we can take
\begin{align*}
c_1 &= e_1      & b_1 &= \nu e_2 + e_3 \\
c_2 &= e_2      & b_2 &= -e_3 \\
c_i &= a_{i+1}  & b_i &= e_{i+1}
\intertext{for all $i > 2$, which shows that $H$ is of the form \eqref{druztri}. 
If $H$ is as in \eqref{667} with $H_2 = 0 = d-2$, then we can take}
c_1 &= e_1 + e_2 & b_1 &= \tfrac14 e_3 \\
c_2 &= e_1 - e_2 & b_2 &= -\tfrac14 e_3 \\
c_i &= a_{i+1}   & b_i &= e_{i+1}
\end{align*}
for all $i > 2$, which shows that again $H$ is of the form \eqref{druztri}. 

Conversely, suppose that $H$ is as in \eqref{667} and of the form \eqref{druztri}. 
By lemma \ref{powercomb}, at least $d$ powers of linear forms are necessary to write 
$H_2$ and $H_3$ as linear combinations of them if $H_2 = 0$, and
at least $d+1$ such powers otherwise. Now assume that $H_2 \ne 0$ or $d \ge 3$. Then there
are at least $3$ powers of linear forms necessary to write $H_2$ and $H_3$ as 
linear combinations of them. Hence there exist a linear combination $h$ of $H_2$ and $H_3$
which is not a linear combination of $(c_1\tp x)^d$ and $(c_2\tp x)^d$.

Since $\dim S^{*} \le n-1 < n$, there exists a nonzero $\mu \in \K^n$ such that
$$
\mu_1 (c_1\tp x)^d + \mu_2 (c_2\tp x)^d + \mu_3 h + \mu_4 H_4 + \mu_5 H_5 + \cdots + \mu_n H_n = 0
$$
By applying $\partial_3$ on both sides, we get
\begin{multline}
d \mu_4 (a_4\tp x)^{d'} + \cdots + d \mu_{d+2} (a_{d+2}\tp x)^{d'}
- d \mu_{d+3} (a_{d+3}\tp x)^{d'} \\ - d\mu_{d+4} (a_{d+4}\tp x)^{d'} - \cdots - 
d \mu_{2d+2} (a_{2d+2}\tp x)^{d'} = 0
\end{multline}
where $d' = d-1$ and $\mu_{n+1} = \mu_{n+2} = \cdots = \mu_{2d+2} = 0$. 
Take $a_1' = e_4$ and $a_2' = a_{d+3}$. Additionally set
$a_{i+1}' = a_{i+2}$ and $a_{i+d}' = a_{i+d+2}$ for all $i$ with $2 \le i \le d$. 
By lemma \ref{alem} with $d$ and $a$ replaced by $d'$ and $a'$ respectively,
we get $\mu_4 = \mu_5 = \cdots = \mu_{2d+2} = 0$. Hence $h$ is a linear combination of 
$(c_1\tp x)^d$ and $(c_2\tp x)^d$. Contradiction, so $H$ is not of the form \eqref{druztri}.

\item
Assume first that $H_2 \ne 0$. If $H$ is as in \eqref{666}, then we can take the $c_j$'s and 
the $b_i$'s as in (ii), and we have \eqref{dittotri}. If $H$ is as in \eqref{667},
then by (ii), $H$ is not of the form \eqref{druztri} and hence neither of the form \eqref{dittotri}.

Assume next that $H_2 = 0$ and that $H$ is of the form \eqref{dittotri}. By $H_1 = H_2 = 0$ and the 
fact that $c_2$ is linearly dependent of $e_1$ and $e_2$, we have $c_2\tp H = 0$. Consequently, $c_2\tp b_1 = 0$ 
on account of \eqref{c2H}. By definition of \eqref{dittotri}, we have $c_1 \tp b_i = c_2\tp b_i = 0$
for all $i$. Since $c_1$ and $c_2$ are linearly independent, we have a contradiction with the independence of
the $b_i$'s. \qedhere

\end{enumerate}
\end{proof}

\noindent
We can make non-homogeneous variants of \eqref{n4} and \eqref{n5} as follows.
In \eqref{n4}, we can replace $x_2$ by $1$, remove $H_2$, and replace $x_{i}$ by $x_{i-1}$ for all 
$i \ge 3$. In \eqref{n5}, we can replace $x_2^{d-1}$ by $x_1^{d-2}$, remove $H_2$, and replace 
$x_{i}$ by $x_{i-1}$ for all $i \ge 3$. In this manner, we get rid of the second coordinate, such that
both the dimension and the Jacobian rank respectively decrease by one, in return for abandoning 
homogeneity, just as with most of theorem \ref{posresnh} with respect to theorem \ref{posres}. 

The maps $H = (0,x_1^d - x_1^{d-1})$ and $H = (0,0,x_1^d - x_1^{d-1})$ are additional non-homogeneous
counterexamples to $\eqref{yantri} \Rightarrow \eqref{druztri}$ and $\eqref{druztri} \Rightarrow 
\eqref{dittotri}$ respectively. By comparing the counterexamples with the positive results of 
theorems \ref{posres}, \ref{posresnh} and \ref{posresrk}, we get the following four questions.

The first two questions are whether \eqref{adddruz} implies \eqref{addkel} in general and whether
\eqref{addkel} implies \eqref{yantri} in dimension three if $\jac H$ is nilpotent (if $F$ satisfies
\eqref{addkel}, then by \cite[Th.\@ 3.9]{MR2948624}), $JH$ gets nilpotent in additon if we compose 
$F$ with some linear map). In case $H$ is homogeneous, 
then the questions are whether \eqref{adddruz} implies \eqref{addkel} in general and whether
\eqref{addkel} implies \eqref{yantri} in dimension four, which are the last two questions.
By theorems \ref{posres} and \ref{posresnh}, the last and the second question respectively
have an affirmative answer when the degree is at most three.

\bibliographystyle{structconj}
\bibliography{structconj}

\end{document}